\newtheorem{theorem}{Theorem}
\newtheorem{proposition}{Proposition}
\newtheorem{lemma}[proposition]{Lemma}
\newtheorem{example}[theorem]{Example}
\begin{document}
	\RUNAUTHOR{Anderson and Zachary}
	\RUNTITLE{Minimax decision rules}
\TITLE{Minimax decision rules for planning under uncertainty}
\ARTICLEAUTHORS{
\AUTHOR{Edward Anderson}
\AFF{Imperial College Business School, UK and}
\AFF{The University of Sydney Business School, Sydney, Australia, \EMAIL{edward.anderson@sydney.edu.au}}
\AUTHOR{Stan Zachary}
\AFF{Heriot-Watt University, UK and 
\AFF{University of Edinburgh, UK, \EMAIL{s.zachary@gmail.com}}}
}

\ABSTRACT{It is common to use minimax rules to make decisions for planning when there is great uncertainty on what will happen in the future. Minimax regret is one popular version of this. We give an analysis of the behaviour of minimax rules in the case with a finite set of possible future scenarios. The use of minimax rules avoids the need to determine probabilities for each scenario, which is an attractive feature in many public sector settings. However, minimax rules will have sensitivity to the choice of scenarios. In many cases using a minimax approach will mean the requirement for what may be regarded as arbitrary probabilities on scenarios is replaced by a similarly arbitrary choice of a very small number of specific scenarios. We investigate this phenomenon. When regret-based rules are used there are also problems arising since the independence of irrelevant alternatives property fails, which can lead to opportunities to game the process. Our analysis of these phenomena considers cases where the decision variables are chosen from a convex set in $R^n$, as well as cases with a finite set of decision choices.}

\maketitle

\section{Introduction}

We are interested in decision-making under uncertainty, and we are particularly concerned with the planning of infrastructure investment, e.g.\ in energy, where the value of such investments will depend on an unknown future. Many such projects will have lifetimes of decades and there are inevitably substantial uncertainties that will ultimately determine whether or not such an investment is worthwhile, and how it may compare with other possible choices of investment. Because of the long time horizons it is not easy to evaluate investments through a probabilistic analysis of how the future will evolve.

The difficulty of assigning probabilities to different possible future scenarios is central to our discussion. There are a number of situations that feature what is sometimes called \emph{deep uncertainty} \citep{Marchau2019}. These are cases where there is no agreement on fundamental aspects of what may happen in the future; often there are actions taken over time as the system evolves in unpredictable ways, and these actions are not under the influence of the decision maker. An example of this type of uncertainty in the UK at the present time relates to the future use of hydrogen as a fuel for domestic heating. One view is that a great deal of domestic heating will move to hydrogen, but another possibility is that there will be greater electrification through the use of heat pumps. What happens will depend on government actions and technology development as well as changing societal attitudes. If for planning purposes we have scenarios involving different proportions of domestic hydrogen heating being used in 2040 and beyond then there seems to be no reasonable way to assign probabilities to these different scenarios.

When faced with problems of this type planners will need to bear in mind that some decisions are self-reinforcing, so that a decision made now may make it more likely that a future outcome occurs which is favourable to that decision. Moreover, decisions are made over time and the way in which decisions either restrict or open up future options will be an important consideration. In this paper we will not address these types of concern. We assume that a decision is made at a single point in time, and that there is a well understood set of possible future scenarios, with descriptions that are not affected by the decisions made. Thus we consider a formal setup which can be described using a matrix of costs associated with different decisions and scenarios. This provides a useful starting point for any analysis even in cases where there is greater complexity.

Our focus is on minimax decision rules that are often used in these circumstances, and we will pay particular attention to minimax regret. Regret is an alternative form for the cost related to a particular scenario and decision pair: it is defined as the additional cost from the decision in comparison with the best possible decision for that scenario.

The approach of developing different possible future scenarios and then making decisions on a minimax regret basis is popular, and is sometimes called Least Worst Regret (LWR). For example, it has been proposed for decisions on facility location, waste management, and flood defences (\citet{Daskin97}, \citet{Chang2007}, \citet{VanderPol2017}). Planning decisions which relate to climate change and emissions have characteristics that often lead planners to use least worst regret as a decision tool (e.g. \citet{Loulou99}, \cite{Sanders2016}). There are two main factors that make minimax regret an attractive decision rule for planners.

First, a minimax approach is inherently conservative. Rather than looking at the average performance the decision maker's attention is focused on the worst outcome. We may regard this as an extreme form of risk aversion. In this context looking at regret rather than profit/cost makes sense since it then gives weight to scenarios in which the decision choice makes a significant difference. Suppose there is a single scenario in which some \textquotedblleft disaster\textquotedblright\ occurs and all decisions do badly, with higher costs than in any other scenario. Then a minimax cost policy will choose the decision which is least bad for this disaster scenario, even if the choice of policy makes almost no difference to the outcome in this scenario. It does not seem sensible to decide everything on the basis of a tiny improvement in a single scenario if this foregoes substantial improvement in most other scenarios, and this undesirable behaviour is avoided by using minimax regret.

Second, the ability to make decisions without the use of probabilities when there is deep uncertainty is often quoted as a reason for the use of minimax regret. In the public sector decisions may have significant financial consequences for individual firms and it is particularly important to make these in a way that follows a clear cut procedure -- to avoid any suggestion of bias or favouritism. The minimax regret approach achieves this without the need to assign probabilities to different possible outcomes.

There has been much discussion of the extent to which decision makers in practice may use estimates of future regret to determine their actions. Regret theory has been argued as being a natural way to explain some of the paradoxes that occur in actual decision making \citep{Bell82} where decision makers seem to have consistent preferences that cannot be easily explained under a fully rational model. The key point is that decision makers acting in an uncertain world not only consider the actual outcomes that can occur, but also what might have been possible. So the discovery that, as things have turned out, another alternative would have been preferable imparts a sense of loss or regret. Decision makers may be prepared to trade-off some amount of (expected) financial return to avoid this regret. There is debate about the extent to which these conclusions are supported by empirical evidence, and prospect theory as developed by Kahneman and Tversky is now widely used as a way of explaining how people choose between options when there is uncertainty on outcomes. But regret theory remains important and a summary of some more recent research in this area is given by \citet{Bleichrodt2015}. A minimax regret policy may match the way decisions are often made in practice when there is a shortage of information and analysis available. Thus it is arguable that minimax regret aligns with our usual mental heuristics, even if these are not entirely rational.

However since Savage's original paper on minimax regret \citep{Savage51} it has been recognised that a weakness of this approach is that it fails to satisfy a property of independence of irrelevant alternatives (IIA). This property implies that adding a single action (or changing the results of an action) cannot have an impact on the choice made by the decision maker unless the new or changed action could be chosen by the decision maker. It is also the case that a minimax regret approach may fail to satisfy transitivity, so that we can have a cycle of possible decisions where each is preferred to the next if just these two are compared on the basis of maximum regret.

The failure of the minimax regret rule to satisfy these very basic properties that we might expect in a good decision rule reflects more general results showing that decision makers who follow a set of reasonable axioms must act as though they are maximizing expected utility. More precisely we can find reasonable assumptions under which a rational decision maker will act as if there is a utility function $u$  and a set of \emph{subjective} probabilities $p_{i}$, $i\in S$, such that the decision maker chooses the decision $x$ that maximizes their expected utility $\sum_{i\in S}p_{i}u(C_{i}(x))$ where $C_i(x)$ is the cost occurring under scenario $i$ and decision $x$. The first person to provide a set of axioms under which this holds was Savage \citep{Savage54}. But there is a rich literature in this area: see for example \citet{Anscombe63} and \citet{Gul92}. These papers require a variety of different assumptions some of which are less likely to apply in the case that is our main focus, which is the planning of infrastructure investment. For examples Savage's original theorem requires the scenario set to be infinite. Many of these papers (though not the original work of Savage, or the paper by Gul) require it to be possible to define lotteries on actions (e.g.\ take action $x$ half the time and action $y$ half the time), so that the decision maker will have a preference between any pair of lotteries on actions. We may simplify the conclusions from this body of research by saying that any rule, such as minimax regret, that does not imply a probability measure on scenarios will lead to circumstances in which it fails to meet one or other axiom that one might regard as appropriate for rational decisions.

A number of authors have considered alternative sets of axioms for decisions that will imply a regret-based rule, or the minimax regret criterion. This work started with \citet{Milnor51} and includes \citet{Hayashi2008} and \citet{Stoye2011}. The choice of axioms vary between authors, but \citet{Hayashi2008} and \citet{Diecidue2017} discuss sets of axioms in which only regret-based decision rules are possible. Stoye gives a set of eight axioms, which if all satisfied, imply that the decision rule must be minimax regret.

However the nature of a regret-based rule implies that the choice made depends on the choice set available, even of options that are not chosen. This means that we can no longer insist on a transitivity condition, and the independence of irrelevant alternatives condition needs to be modified. Stoye's axioms that imply a minimax regret rule include two axioms related to the IIA and in both cases these are easily seen to hold for minimax regret: (axiom 4) when all choices have the same outcome in all scenarios, then introducing a new choice cannot change the decision, unless this new choice is selected; and (axiom 5) introducing a new choice with the property that in each scenario there is an existing choice that is at least as good cannot change the decision, unless this new choice is selected.

In this paper we will explore the implications of using minimax rules, and particularly the use of minimax regret. We highlight two key limitations that recur in different contexts. First the failure of the IIA condition is very widespread when regret functions are used, particularly with minimax regret. When a planner is choosing between projects from a number of firms this makes the decision open to being gamed. A second important limitation is that the number of scenarios required to determine the solution is no more that than $n+1$ in the case that there is a continuous decision problem in an $n$-dimensional space. This shows that the decision is critically dependent on the exact set of scenarios included in the analysis. We propose a modification that may help planners to improve their decision processes.

In the next section we set up the decision framework and show how this can extended both through the some variations in the definition of regret, and through redefining the uncertainty set in a robust interpretation of the minimax decision. In section 3 we discuss the behaviour of minimax rules when there are only a finite number of possible decisions. In section 4 we extend this to consider cases where the decision variables are chosen from some set in $\mathbb{R}^{n}$. In section 5 we look at a case where the decision is to choose which of a set of possible projects should be undertaken. In section 6 we look in more detail at a case where we balance capacity investment and risk. This gives an opportunity to use structural properties of the scenarios and we show how this works out in a case study from capacity procurement for electricity generation. 

\bigskip

\section{Framework}

We write $S$ for the set of possible scenarios, which we will assume is finite. We will assume a decision vector $x\in D\subseteq \mathbb{R}^{n}$ and for each $i\in S$ there is a cost function $C_{i}(x)$ giving the costs born by the decision maker if $x$ is chosen and then scenario $i$ occurs. Each cost function may represent a monetary cost, or some other measure. In the case of a single scenario $i$, the optimal decision $x$ would be that which minimised $C_{i}(x)$. But with multiple scenarios, the optimal decisions will in general differ between scenarios and so we aim to minimise some composite objective function $\bar{f}(x)$ defined on $D$.

There are many ways to define such a composite function. The classic expected utility framework uses $\bar{f}(x)=\sum_{i\in S}p_{i}f_{i}(x)$ where the $p_{i}$ are non-negative and each $f_{i}(x)$ is an (increasing) function of the costs $C_{i}(x)$. We can interpret $f_{i}(x)$ as the negative utility associated with costs $C_{i}(x)$, i.e. $f_{i}(x)=-u(C_{i}(x))$ and the $p_{i}$ (after normalising to sum to one) as probabilities associated with each scenario. Then minimizing $\bar{f}(x)$ is simply maximizing the expected utility for a utility function $u$. When $u$ is linear this becomes simply minimizing expected cost, and the risk averse case corresponds to concave $u$. Savage's Theorem implies that this form of function $\bar{f}(x)$ is the only possibility if we wish to satisfy some reasonable rationality assumptions. However there is still a difficulty in assigning appropriate probabilities to the scenarios.

A second possibility is to define 
\[
\bar{f}(x)=\max_{i\in S}C_{i}(x) 
\]
which leads to the minimax cost rule of choosing $x$ to solve $\min_{x\in D}\max_{i\in S}C_{i}(x)$ This is equivalent to the expected utility formulation above in the case that the utility function $u$ is sufficiently concave in costs, corresponding to extreme risk aversion. For example if the utility function is $u(z)=1-\exp(-kz)$ for payoff $z$, and we let $k \rightarrow \infty$ then for any fixed decision $x$, we have $f_{i}(x)=-1+\exp(kC_{i}(x))$ and the scenario $i^*$ with the largest cost $C_{i^*}(x)$ becomes dominant in the sum $\sum_{i\in S}p_{i}f_{i}(x)$ in any case where $p_{i^*}>0$, no matter how small this probability is. 

\subsection*{Regret-based decision rules}
A minimax decision rule does not make any distinction between scenarios, and does not make use of probability estimates for different scenarios. However, this type of extreme risk aversion may give rise to decisions that seem inappropriate. For example if there is a single scenario which has a high, but almost constant, cost across decisions then the outcomes in this single scenario will determine the choice made. 

The result of a minimax decision rule depends on the overall level of costs for each scenario. One difficulty in a planning context is that there will be costs that do not depend on the decision made, but do depend on the scenario that eventuates. In the Introduction we mentioned a scenario that could be related to the uptake of hydrogen for domestic heating in the UK. A true minimax cost policy for electricity generation capacity built would be critically influenced by the costs associated with building hydrogen infrastructure which are unrelated to generation capacity. It is often the case that these scenario based costs are hard to estimate. For this reason it is attractive to consider rules which depend only on the \emph{relative} costs in any given scenario. 

We consider decision rules based on regret. For each scenario $i\in S$, we define $R_{i}(x)$, the regret function defined on $D$, as 
\begin{equation}
\label{def:regret}
R_{i}(x)=C_{i}(x)-\inf_{z\in D}C_{i}(z). 
\end{equation}
Thus $R_{i}(x)$ represents the `regret' felt by the decision maker if she chooses $x$ and scenario $i$ occurs. Since, $\inf_{x\in D}R_{i}(x)=0$, $i\in S$, if in retrospect the best choice of $x$ was made for the scenario that occurs then there is no regret. Assuming that scenario $i$ occurs with probability $p_{i}$, the optimization facing the decision maker who wants to minimize expected regret is to find $\min_{x}\sum_{i\in S}p_{i}R_{i}(x)$, and this is equivalent to finding $\min_{x}\sum_{i\in S}p_{i}C_{i}(x)$ which is the expected cost. The minimax regret rule sets 
\[
\bar{f}(x)=\max_{i\in S}R_{i}(x) 
\]
so that the decision maker solves $\min_{x\in D}\max_{i\in S}R_{i}(x)$. This will produce a different result to the minimax cost rule.

We mentioned in the introduction the possibility of the minimax cost policy doing badly on some problem instances where one scenario gives high costs for all decisions, but with only a small difference between them. To support the idea that minimax regret may do better on average we compare the behaviour of the two decision rules on a random problem instance. We consider the simplest possible case with just two scenarios and three possible decisions $x,y,z$. Thus a problem instance is determined by the six cost values $C_i(x)$, $C_i(y)$ and $C_i(z)$, $i=1,2$. In a symmetric version of this problem we can suppose that each scenario is equally likely. Thus the expected cost for a given problem instance is the average cost over the two scenarios for the decision chosen, which can then be compared between the two decision rules. When the six cost values are all drawn randomly and independently from $[0,1]$ then the minimax cost rule gives an expected cost of $12/35=0.3429$. The minimax regret rule does better - though it is very hard to give an exact analysis, we can show by simulation that the expected cost is $0.3286$.  

In the case that the set $D$ of possible decisions is finite we may define a generalised regret through looking at some increasing symmetric function $r$ of the set of possible results obtained from different decisions. Then we define the generalised regret
\[
  R_{i}^{(r)}(x)=C_{i}(x)-r\{C_{i}(z):z\in D \}.
\]
Thus the standard definition of regret in \eqref{def:regret} is equivalent to choosing $r\{c_1,c_2, \ldots, c_m \}=\min \{c_1,c_2, \ldots, c_m \}$.

Amongst this class of regret measures it may be natural to consider the mean and median regret defined by taking $r$ as the mean or median of the set. This corresponds to a decision maker who is judged after the fact by looking at the performance of the chosen decision in comparison with the mean or median of all possible outcomes in the scenario that has occurred.

Much of our development will cover both cost and regret, and we write $f_{i}(x)$, $i\in S$, $x\in D$, for the general case where $f_{i}$ is either $C_{i}$ or $R_{i}^{(r)}$. 

\subsection*{Minimax as robust optimisation}

We will also generalise our discussion of minimax decision rules by considering them as robust versions of expected cost minimizers when there is a range of possible probabilities for the scenarios. We suppose that the set of allowed probability measures is restricted to a convex subset of the set of all probability measures, which we write as $\mathbb{P}_{A}$. This approach can be useful when a planner wishes to define some core scenarios around which a wider scenario set is constructed. For example these may represent different ways in which a government may choose to meet some defined policy goals in the future. This occurs for National Grid in their use of Future Energy Scenarios which reflect different ways in which a net zero GHG emissions target may be achieved in the UK by 2050 (see \cite{NationalGridFES}). \cite{Hughes2010} use the term \textquotedblleft backcasting\textquotedblright\  to refer to this way of generating scenarios. The report by \cite{Dent2020} commissioned by OFGEM in the UK also recommends an approach based around core scenarios.

The introduction of a subset $\mathbb{P}_{A}$ allows the definition of a partial ordering amongst the probabilities of different scenarios, $p_{i}\geq p_{j}$, etc. When there are core scenarios we can then define extreme scenarios, where each extreme scenario is guaranteed less likely than the corresponding core scenario. Or we may be more explicit and define a set of scenarios around each core scenario with probabilities defined conditional on one of these scenarios occurring (the core or one of its associated extremes). Within this framework we can also fix lower bounds on the probabilities of certain scenarios.

Given the set $\mathbb{P}_{A}$ of possible probability distributions over scenarios, we can formulate the robust optimisation problem: 
\[
\text{RO}(\mathbb{P}_{A}):~~\min_{x\in D}\max_{P\in \mathbb{P}_{A}}\mathbb{E}_{P}(f(x))
=\min_{x\in D}\max_{(p_{i}:i\in S)\in \mathbb{P}_{A}}\sum_{i\in S}p_{i}f_{i}(x). 
\]
In the case that $\mathbb{P}_A = \{p_{i},i\in S~|~p_{i}\geq 0,\sum p_{i}=1\}$ then RO$(\mathbb{P}_A )$ becomes the standard minimax problem, since
\[
\max_{(p_{i}:i\in S)\in \mathbb{P}_{A}}\sum_{i\in S}p_{i}f_{i}(x)=\max_{i\in S}f_{i}(x). \]

An important case occurs when $\mathbb{P}_{A}$ is defined by a finite set of linear constraints satisfied by the probabilities of each scenario. Thus we have a matrix $A$ of constraints with the probabilities $p_{i}$ of each scenario in the set 
$\mathbb{P}_{A}=\{p_{i},i\in S~|~Ap\leq 0,p_{i}\geq 0,\sum p_{i}=1\}$. 
This covers the case with no constraints when $A$ is empty, and the case with given probabilities through letting pairs of constraints such as $p_{1}-2p_{2}\leq 0$, $2p_{2}-p_{1}\leq 0$, define the ratios between probabilities and then using the fact that $\sum p_{i}=1$.

We will not make use of this in our discussion, but it is worth observing that the robust problem can be solved easily through dualising the inner maximization: 
\[
\begin{tabular}{ll}
maximize & $\sum_{i\in S}p_{i}f_{i}(x)$ \\ 
subject to & $Ap\leq 0,$ \\ 
& $\sum_{i\in S}p_{i}=1$, $p_{i}\geq 0.$
\end{tabular}
\]
We can formulate the dual to this linear program: Choose $w\in \mathbb{R}$, and $q$ in $\mathbb{R}^{m}$ to 
\[
\begin{tabular}{ll}
minimize$_{w,q}$ & $w$ \\ 
subject to & $w+\left( A^{\top }q\right) _{i}\geq f_{i}(x)$, $i\in S,$ \\ 
& $q\geq 0.$
\end{tabular}
\]
Then the original minimax problem can be written as the following optimization problem: Choose $x\in D,w\in \mathbb{R}$ and $q\in \mathbb{R}^{m}$ to 
\[
\begin{tabular}{ll}
minimize$_{x,w,q}$ & $w$ \\ 
subject to & $w+\left( A^{\top }q\right) _{i}\geq f_{i}(x)$, $i\in S,$ \\ 
& $q\geq 0.$
\end{tabular}
\]

There are three types of decision problem that we will investigate, depending on the set of possible decisions, $D$. First $D$ may be a finite set without any particular structure. Second $D$ may be a convex subset of $\mathbb{R}^{n}$, for example a planner may wish to determine the right capacities for some set of transmission links to be constructed. Third decisions may be required, simultaneously, on whether or not to go ahead with a number of different projects. This is a selection problem and we have $D=\{0,1\}^{n}$.\bigskip

\section{Minimax rules with finite decision sets}

In this section we use the simplest setting of $D$ finite to demonstrate the failure of some properties that we might expect. First we show that the Independence of Irrelevant Alternatives property fails to hold for the minimax regret problem. Thus removing an option which is not chosen can have an impact on the optimal choice. In fact, different choices for the characteristics of a new option that is added can lead to many different existing choices becoming optimal under the minimax regret rule. We show if a decision has the minimum cost under one of the scenarios, then it can become the minimax regret choice if a new option is added with correctly chosen costs.

\begin{lemma}
\label{Lem:Discrete_D_Gaming}Suppose that there is some $y\in D$ and $k\in S$, where $C_{k}(y)<C_{k}(x)$ for $x\neq y$. Then we may define a new possible decision $z$ and a set of values $C_{i}(z)$, $i\in S$ such that $y$ is chosen for the minimax regret problem $\min_{x\in D\cup \{z\}}\max_{i\in S}R_{i}(x)$.
\end{lemma}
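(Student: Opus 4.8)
\emph{Proof plan.} The plan is to prove the lemma by exhibiting the new decision $z$ explicitly. The guiding idea is this. Since $y$ is the unique cost minimiser in scenario $k$, at present $R_k(y)=0$ while $R_k(x)>0$ for every other $x\in D$; but this single advantage is fragile, because some competitor $x$ might dominate $y$ in all remaining scenarios and so beat $y$ overall. To neutralise this, I would choose $z$ to be \emph{strictly cheaper than $y$ in scenario $k$}, so that $z$ becomes the new cost minimiser there: this raises the scenario-$k$ regret of \emph{every} decision of $D$ by the same large constant while still leaving $y$ the decision of $D$ with the smallest scenario-$k$ regret. Simultaneously I would make $z$ very expensive in every other scenario, both so that the infima (hence the regrets of the original decisions) in those scenarios are unchanged, and so that $z$ itself acquires a large regret and cannot be the minimax-regret choice.

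Concretely, I would first set $M=\max_{i\in S}\bigl(C_i(y)-\min_{x\in D}C_i(x)\bigr)$, the current maximum regret of $y$ over $D$, and note that since $R_k(y)=0$ this maximum is attained at some scenario other than $k$, so $\max_{i\ne k}\bigl(C_i(y)-\min_{x\in D}C_i(x)\bigr)=M$. Then I would define $z$ by $C_k(z)=C_k(y)-M-1$ and, for each $i\ne k$, $C_i(z)=\min_{x\in D}C_i(x)+M+2$, and verify the claim scenario by scenario over $D\cup\{z\}$. In scenario $k$, $z$ is the unique minimiser, so $R_k(x)=C_k(x)-C_k(z)=\bigl(C_k(x)-C_k(y)\bigr)+M+1$ for $x\in D$; hence $R_k(y)=M+1$, $R_k(x)>M+1$ for every $x\in D\setminus\{y\}$ (using the hypothesis $C_k(x)>C_k(y)$), and $R_k(z)=0$. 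In each scenario $i\ne k$, the added cost $C_i(z)$ exceeds $\min_{x\in D}C_i(x)$, so the infimum over $D\cup\{z\}$ equals that over $D$; hence $R_i(x)$ is unchanged for $x\in D$, in particular $R_i(y)\le M$, while $R_i(z)=M+2$. Combining these, $\max_i R_i(y)=M+1$, every $x\in D\setminus\{y\}$ has $\max_i R_i(x)\ge R_k(x)>M+1$, and $z$ has $\max_i R_i(z)=M+2>M+1$; so $y$ is the unique minimiser of $\max_{i\in S}R_i(\cdot)$ over $D\cup\{z\}$.

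I do not expect a serious technical obstacle: $D$ is finite, so every infimum is attained and the argument is purely arithmetic. The single genuine insight — and the step most likely to be gotten wrong on a first attempt — is realising that $z$ must be made \emph{better} than $y$ in scenario $k$, not worse: keeping $z$ costly in scenario $k$ so as to preserve $R_k(y)=0$ leaves any competitor that dominates $y$ off scenario $k$ entirely untouched, whereas inflating the common scenario-$k$ regret by the amount $M+1$ swamps that competitor's advantage. Two minor points to flag: the case $|S|=1$ should be excluded or disposed of separately (there $y$ already has zero regret over $D$ and adding any $z$ with $C_k(z)>C_k(y)$ keeps it optimal), and the degenerate case $M=0$, where $y$ already minimises every $C_i$ over $D$, causes no trouble since the construction above still gives $\max_i R_i(y)=1$ with all other decisions strictly worse.
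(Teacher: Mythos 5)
Your construction is correct, and it follows the same basic strategy as the paper's own proof: introduce a decision $z$ that strictly undercuts $y$ in scenario $k$ (so that every original decision's scenario-$k$ regret is inflated by a common amount, with $y$ still the best member of $D$ there) and make $z$ expensive in every other scenario (so the minima, and hence the regrets of the original decisions, are unchanged there, while $z$ itself acquires a large regret). The genuine difference lies in how the size of the undercut is calibrated, and here your version is actually the more careful one. The paper sets $C_k(z)=C_k(y)-M+L$ with $M=\max_{j\in S}\max_{x\in D}C_j(x)$ and $L=\max_{j\in S}\min_{x\in D}C_j(x)$, so the scenario-$k$ regrets are inflated by $M-L$; its argument then relies on the claim $R_j(y)<M-L$ for $j\neq k$, which need not hold: with $D=\{x,y\}$, $S=\{j,k\}$, $C_k(y)=9$, $C_k(x)=9.5$, $C_j(y)=10$, $C_j(x)=0$, one has $M-L=1$ but $R_j(y)=10$, and the paper's $z$ leaves $x$ as the minimax-regret choice. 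Your calibration, inflating by $M'+1$ where $M'=\max_{i\in S}\bigl(C_i(y)-\min_{x\in D}C_i(x)\bigr)$ is $y$'s current worst regret, and pricing $z$ at $\min_{x\in D}C_i(x)+M'+2$ in the other scenarios, is by construction large enough to dominate $y$'s regrets elsewhere, so every step goes through and $y$ is even the unique minimiser. Your two flagged side cases are also right: when $|S|=1$ any sufficiently expensive $z$ works trivially, and $M'=0$ causes no difficulty. In short, the proposal is complete, and its tighter bookkeeping repairs the inequality that the paper's printed constants do not in general deliver.
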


\begin{proof}{Proof.} We write $M=\max_{j\in S}\max_{x}C_{j}(x)$ for the global maximum over the existing sets $D$ and $S$, and $L=\max_{j\in S}\min_{x\in D}C_{j}(x)$ for the largest of of the minimums over $D$ (which may be negative). We set $C_{i}(z)=M$ for $i\in S\backslash \{k\}$ and $C_{k}(z)=C_{k}(y)-M+L$. Then, since $M>L$, the decision $z$ replaces $y$ as the preferred choice in scenario $k$ with $R_{k}(y)=C_{k}(y)-C_{k}(z)=M-L$ and $R_{k}(x)=C_{k}(x)-C_{k}(z)>C_{k}(y)-C_{k}(z)=M-L$ for $x\neq y,z$. Thus $\max_{i\in S}R_{i}(x)>M-L$ for $x\neq y,z$. For $i\in S\backslash \{k\}$ we have $\min_{x}C_{i}(x)\leq L$, and so we can deduce that $R_{i}(z)=M-\min_{x}C_{i}(x)>M-L$ for $i\in S\backslash \{k\}$, so $\max_{i\in S}R_{i}(z)>M-L$. Also $R_{j}(y)=C_{j}(y)-\min_{x}C_{j}(x)<M-L$ for $j\neq k$, so $\max_{i\in S}R_{i}(y)=M-L$. This establishes the result. 
\halmos 
\end{proof}
\bigskip
It is easy to see that this result will also hold for the minimax mean regret problem, by scaling up the value of $M$ appearing in the proof. However Lemma \ref{Lem:Discrete_D_Gaming} does not hold for the minimax median regret problem, which is less influenced by extremely high or low costs. This has an advantage when there is a danger of the solution being gamed, i.e. manipulated through an artificial decision option being made available. 

We have a stronger result that the use of any function based on regrets to make decisions will fail the IIA property, unless the function reduces to simply minimizing expected cost under some  choice of probabilities over scenarios. Here we take the IIA property to mean that if decision $x$ is preferred to decision $x^{\prime }$, then this preference order is not changed by any change in the costs associated with a third decision $x^{\prime \prime } $. 
 
\begin{lemma}
\label{lem:rf} Suppose that, for a given decision problem, the decision space $D$ consists of at least three points, and the decision is made by minimizing a continuous non-decreasing function of the set of regrets  $f^{\ast }(R_{i}(x):i\in S)$ over $x\in D$ and in addition satisfies the IIA property, then the decision process is equivalent to minimizing the expected costs for some set of probabilities $p_{i}$, $i\in S$.
\end{lemma}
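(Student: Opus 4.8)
The plan is to exploit the IIA hypothesis, applied to a family of artificial cost matrices, to show that the preference order induced by $f^{\ast}$ on regret vectors is unchanged when both vectors are shifted by a common vector; and then to deduce that a continuous, coordinatewise non-decreasing, complete preference order with this invariance is represented by a linear functional with non-negative weights, which normalise to the required probabilities. This is a self-contained counterpart, in the present finite setting, of the classical fact that sufficiently regular choice behaviour must be expected-utility behaviour.

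First I would reduce everything to a statement purely about $f^{\ast}$. Fix three distinct decisions $x,x',x''\in D$. Given any two regret vectors $a,b\ge 0$ and any shift $t$, build a cost problem by setting $C_i(x)=a_i$, $C_i(x')=b_i$, $C_i(x'')=0$, and $C_i(z)$ strictly larger than all of these for every remaining $z\in D$; then $\min_{z\in D}C_i(z)=0$, so $R_i(x)=a_i$ and $R_i(x')=b_i$. Now change only the costs of $x''$, to $C_i(x'')=-t_i$: the scenario minimum becomes $-t_i$, so the regrets of $x$ and $x'$ become $a+t$ and $b+t$, while the costs of $x$ and $x'$ are untouched. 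By IIA the strict preference between $x$ and $x'$ is unchanged, so $f^{\ast}(a)<f^{\ast}(b)\iff f^{\ast}(a+t)<f^{\ast}(b+t)$; choosing $a,b$ with large entries makes any given shift $t$ (of either sign) admissible, and continuity extends the equivalence to the case of equality. Hence the ranking $a\succeq b\iff f^{\ast}(a)\ge f^{\ast}(b)$ satisfies $a\succeq b\iff a+t\succeq b+t$.

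Next I would recover the linear representation. Leaving aside the trivial case where $f^{\ast}$ is constant, fix a direction $u$ with $u_i>0$ for all $i$; monotonicity and continuity make $s\mapsto f^{\ast}(su)$ strictly increasing, so each $a$ has a unique calibration $\lambda(a)$ with $a\sim\lambda(a)u$, and $\lambda$ is continuous and represents $\succeq$. Translation invariance gives $\lambda(a+su)=\lambda(a)+s$ at once; applying it to $t\sim\lambda(t)u$ and then adding $a$ gives $a+t\sim\lambda(t)u+a$, whence $\lambda(a+t)=\lambda(\lambda(t)u+a)=\lambda(a)+\lambda(t)$. So $\lambda$ is additive and continuous, hence $\lambda(a)=\sum_{i\in S}p_i a_i$ with $p_i\ge 0$ by monotonicity; normalising, $\sum_i p_i=1$. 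Since $f^{\ast}(R(x))$ and $\sum_i p_i R_i(x)$ order the decisions in $D$ identically, and the latter differs from $\sum_i p_i C_i(x)$ only by the constant $\sum_i p_i\min_{z\in D}C_i(z)$, minimising $f^{\ast}$ of the regrets over $D$ coincides with minimising the expected cost $\sum_i p_i C_i(\cdot)$.

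The step I expect to be the main obstacle is the second one: translation invariance by itself only forces the indifference set through the origin to be an additive subgroup, and turning this into genuine linearity requires first ruling out a ``flat'' $f^{\ast}$ so that the calibration $\lambda$ is well defined, single-valued and continuous — essentially a separation argument showing that subgroup must be a hyperplane — before the Cauchy-equation identity above can be invoked. A secondary technical point is keeping the construction in the first step inside the domain $D\subseteq\mathbb{R}^n$ and consistent with $R_i(\cdot)\ge 0$, which is why the regret vectors are placed with large positive entries, so that every perturbation of $x''$ remains admissible and the conclusion, obtained far from the boundary, then extends to all regret vectors by continuity.
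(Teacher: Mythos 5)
Your proposal is sound and, after its first step, takes a genuinely different route from the paper. The first step is essentially the paper's: both arguments use IIA to show that the ordering induced by $f^{\ast}$ on regret vectors is invariant under a common shift, the paper by adjusting a third decision's cost one scenario at a time within the given problem and then iterating, you by an explicit cost matrix in which $x$ and $x'$ carry arbitrary non-negative regret vectors $a,b$ and a single change to $x''$ shifts both by $t$; your construction usefully makes explicit what the paper leaves implicit, namely that arbitrary pairs of non-negative vectors can be realized as regret vectors of two decisions, which is exactly where $|D|\geq 3$ enters (also note the equality case follows at once by applying your equivalence with the reverse shift $-t$; no appeal to continuity is needed there). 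From this common point the paper shows that the level sets of $f^{\ast}$ are translates of a fixed linear subspace, via the rational-multiple iteration, continuity, and a separation/dimension argument identifying that subspace as a hyperplane with normal $p$; you instead calibrate along a strictly positive direction $u$, obtain a certainty-equivalent $\lambda$ satisfying $\lambda(a+t)=\lambda(a)+\lambda(t)$, and conclude linearity from the Cauchy functional equation with continuity and monotonicity. Your route buys a more standard and arguably cleaner linear-representation step, avoiding the paper's somewhat delicate dimension claim for the indifference set; the paper's route stays closer to the geometry of the level sets and never needs the calibration to exist. One point you must repair in a full write-up: strict monotonicity of $s\mapsto f^{\ast}(su)$ does not follow from continuity and coordinatewise monotonicity alone even when $f^{\ast}$ is non-constant (for instance $f^{\ast}(a)=\max\{\max_{i}a_{i}-1,0\}$ is flat along the ray near the origin); you need translation invariance here, since a flat segment on the ray propagates along the whole ray, monotonicity then makes $f^{\ast}$ constant on a translate of the positive orthant, and shifting back down by translation invariance would make $f^{\ast}$ globally constant, the case you have excluded. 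With that observation inserted, the existence, uniqueness and continuity of $\lambda$, the additivity argument, the non-negativity and normalisation of the weights, and the final passage from expected regret to expected cost all go through as you describe.
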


\begin{proof}{Proof.}  It will be convenient to assume $s$ scenarios and regard the set of regrets as a vector in $\mathbb{R}^{s}$, so $f^{\ast }:\mathbb{R}^{s}\rightarrow \mathbb{R}$. Moreover, since regret functions are non-negative, the function $f^{\ast }$ is naturally defined on $\mathbb{R}_{+}^{s}$. The IIA condition implies that, for any given scenario (assume without loss of  generality it is scenario~$1$) and for any two possible decisions $x$ and $x^{\prime }$ in $D$, we have that
\begin{multline}
f^{\ast }(R_{1}(x),...,R_{s}(x))>f^{\ast }(R_{1}(x^{\prime }),...,R_{s}(x^{\prime }))   \\
\implies f^{\ast }(R_{1}(x)+a,...,R_{s}(x))>f^{\ast }(R_{1}(x^{\prime})+a,...,R_{s}(x^{\prime }))  \label{implicationIIA}
\end{multline}
for all $a\geq -\min (R_{1}(x),R_{1}(x^{\prime })$. \ The reason for this is that the values $R_{1}(x)$ and $R_{1}(x^{\prime })$ of the regret $R_{1}$ may be changed by the same amount $a$ by  suitable adjustment of the cost $C_{1}(x^{\prime \prime })$ associated with one or  more of the remaining decisions $x^{\prime \prime }$; the IIA property implies that this should not change the direction of the inequality on the left hand side of \eqref{implicationIIA}. 

Iteration of \eqref{implicationIIA} implies that 
\begin{multline}
f^{\ast }(R_{1}(x),...,R_{s}(x))>f^{\ast }(R_{1}(x^{\prime }),...,R_{s}(x^{\prime }))   \\
\implies f^{\ast }(R_{1}(x)+a_{1},...,R_{s}(x)+a_{s})>f^{\ast }(R_{1}(x^{\prime })+a_{1},...,R_{s}(x^{\prime })+a_{s}),  \label{implicationIIA2}
\end{multline}
provided that all of $R_{1}(x)+a_{1},...,R_{s}(x)+a_{s}$ and $R_{1}(x^{\prime })+a_{1},...,R_{s}(x^{\prime })+a_{s}$ remain non-negative. Moreover we can easily establish the same implication with inequalities reversed. And the combination of both implications shows that
\begin{multline}
f^{\ast }(R_{1}(x),...,R_{s}(x))=f^{\ast }(R_{1}(x^{\prime }),...,R_{s}(x^{\prime }))   \\
\implies f^{\ast }(R_{1}(x)+a_{1},...,R_{s}(x)+a_{s})=f^{\ast}(R_{1}(x^{\prime })+a_{1},...,R_{s}(x^{\prime })+a_{s}). \label{implicationIIA3} 
\end{multline}

Now suppose that $f^{\ast }(\mathbf{R})=f^{\ast }(\mathbf{R}+\mathbf{a})$ for arbitrary vectors $\mathbf{R}$ and $\mathbf{a}$, with $\mathbf{R}\geq 0$ and $\mathbf{R}+\mathbf{a}\geq 0$. We will show that $f^{\ast }(\mathbf{R})=f^{\ast }(\mathbf{R}+q\mathbf{a})$ for any $q\in \mathbb{R}$. Since $f^{\ast }$ is continuous the set of points having the same value is closed, and hence it will be enough to show that $f^{\ast }(\mathbf{R})=f^{\ast }(\mathbf{R}+q\mathbf{a})$ for any rational number $q$ with $\mathbf{R}+q\mathbf{a}\geq 0$.

Consider an arbitrary rational $q=m_{1}/m_{2}$ with both $m_{1}$ and $m_{2}$ being integers. Then note that we have $f^{\ast }(\mathbf{R})=f^{\ast }(\mathbf{R}+(1/m_{2})\mathbf{a})$. To establish this we observe that if $f^{\ast }(\mathbf{R})>f^{\ast }(\mathbf{R}+(1/m_{2}) \mathbf{a})$ then from (\ref{implicationIIA2})  $f^{\ast }(\mathbf{R}+(1/m_{2})\mathbf{a})>f^{\ast }(\mathbf{R} +(2/m_{2})\mathbf{a})$ and we can continue in this way $m_{2}$ times to show $f^{\ast }(\mathbf{R})>f^{\ast }(\mathbf{R}+ \mathbf{a})$ which is a contradiction. In exactly the same way we get a contradiction if $f^{\ast }(\mathbf{R})<f^{\ast }(\mathbf{R}+(1/m_{2}) \mathbf{a})$. Thus we have established that  $f^{\ast }(\mathbf{R})=f^{\ast }(\mathbf{R}+(1/m_{2})\mathbf{a})$.  But now we may use (\ref{implicationIIA3}) repeatedly to show $f^{\ast }(\mathbf{R}+(1/m_{2})\mathbf{a})=f^{\ast }(\mathbf{R}+(2/m_{2})\mathbf{a})$, $f^{\ast }(\mathbf{R}+(2/m_{2})\mathbf{a})=f^{\ast }(\mathbf{R}+(3/m_{2})\mathbf{a})$ and so on (provided that we do not reach a vector with a negative component).  After $m_1$ repetitions we have the equality we want: $f^{\ast }(\mathbf{R})=f^{\ast }(\mathbf{R}+(m_{1}/m_{2})\mathbf{a})$.

Thus the set $L=\{x:$ $f^{\ast }(\mathbf{R})=f^{\ast }(\mathbf{R}+x)\}$ is the intersection of a linear subspace with $\mathbb{R}^{s}_{+}$. Continuity of $f^{\ast }$ implies that the set $\mathbf{R}+L$ divides $\mathbb{R}^{s}$ into two half spaces where  $f^{\ast }(\mathbf{R})<f^{\ast }(x)$ and $f^{\ast }(\mathbf{R})>f^{\ast}(x)$ and so $L$ has dimension $s-1$. We have $L$ defined by its normal vector $p$ with $L=\{x:p^{\top }x=0\}$. 

Now consider any other vector of regrets\ $\mathbf{R}^{\prime }$ with $f^{\ast }(\mathbf{R})\neq $ $f^{\ast }(\mathbf{R}^{\prime })$ Then we may use \eqref{implicationIIA3} with $\mathbf{a}=\mathbf{R}^{\prime }-\mathbf{R}$ to show that 
\[
f^{\ast }(\mathbf{R}^{\prime })=f^{\ast }(\mathbf{R}^{\prime }+x)\text{ for any }x\in L. 
\]
Thus $f^{\ast }$ is a function with sets of equal values all being translations of the linear subspace $L$. From this we deduce that we can write $f^{\ast }(x)$ as a function of the scalar $p^{\top }x$. 

We may scale so that $\sum p_{i}=1$, and then $f^{\ast }$ non-decreasing shows $p_{i}\geq 0$, so these can be interpreted as probabilities. We have established that $f^{\ast }(\mathbf{R})>f^{\ast }(\mathbf{R}^{\prime })$ if and only if  $\sum p_{i}R_{i}>\sum p_{i}R_{i}^{\prime }$ and the choice between two decisions is made by minimizing the expected regret under probabilities $p_{i}$.  As we observed earlier this is equivalent to minimizing expected costs, since
\[
\sum_{i\in S} p_{i}R_{i}(x)=\sum_{i\in S} p_{i}C_{i}(x)-\sum_{i\in S} p_{i}\inf_{z\in D}C_{i}(z).
\]
\halmos
\end{proof}

\bigskip 
The result of Lemma \ref{lem:rf} will also hold for some generalised regrets. The proof is unaltered except that we need to pay attention to the set $W_R$ of possible values that the generalised regret may take, since this is no longer guaranteed to be non-negative. We require that with three or more points in $D$ the generalised regret may take all values in $W_R$ even if the values of $C_i(x)$ for two points $x_1,x_2$ are given. This will be true for the mean regret and hence the lemma holds. However we cannot prove the same result for the median regret.

\cite{Shapiro2020} considered minimax decision rules and give conditions under which a solution to the minimax problem is also a solution to the problem of minimizing the expected costs for an appropriately chosen set of probability weights. Their result applies in the case that $D$ is a convex subset of $\mathbb{R}^{n}$ and the functions $f_{i}$ are convex with $S$ finite. We give an example to show that this result will not hold when there is a finite set of possible decisions and no specific structure for $f_{i}$. It is sometimes claimed that the fact that minimax gives equal treatment to all scenarios makes it likely to be similar to an expected cost approach with equal weights on all scenarios, but this example shows that it may not be possible to find even one probability distribution over scenarios that generates the same choice as a minimax rule.

\begin{example}
\label{Example:no_equiv_prob} In this example $D$ is the set $\{x,y,z\}$ and there are 3 scenarios. Costs are as follows (and these are also the regrets):
\[
\begin{tabular}{llll}
& $x$ & $y$ & $z$ \\ 
Scenario $A$ & $4$ & $0$ & $5$ \\ 
Scenario $B$ & $3$ & $5$ & $0$ \\ 
Scenario $C$ & $3$ & $2$ & $0$%
\end{tabular}
\]
It is easy to see that the minimax policy chooses $x$. Now suppose that we have probabilities $p_{A},p_{B}$ and $p_{C}$ for the three scenarios. Using $p_{A}=1-p_{B}-p_{C}$ the inequalities needed for $x$ to be selected under a minimum expected cost (or minimum expected regret) policy are
\begin{eqnarray*}
4-p_{B}-p_{C} &\leq &5p_{B}+2p_{C}, \\
4-p_{B}-p_{C} &\leq &5-5p_{B}-5p_{C},
\end{eqnarray*}
from which we deduce 
\begin{eqnarray*}
6p_{B}+3p_{C} &\geq &4, \\
4p_{B}+4p_{C} &\leq &1.
\end{eqnarray*}
These are incompatible with non-negative probabilities, since the second inequality implies that both $p_{B}$ and $p_{C}$ are less than $1/4$ which implies $6p_{B}+3p_{C}\leq 9/4$ contradicting the first inequality. So there is no assignment of probabilities to scenarios under which the minimax regret policy of $x$ will be chosen.
\end{example}

The failure of minimax regret to satisfy the IIA property also allows the possibility of non-transitivity in decision-making in the sense made clear in Example~\ref{ex:nt} below.  

\begin{example}
\label{ex:nt}
We take the decision space~$D$ to consist of three possibilities   $D = \{x,y,z\}$ and we again consider a scenario space~$S = \{A,B,C\}$ The costs of the decisions under each   scenario are given by
\[
    \begin{tabular}{rrrr}
                 &  $x$ &  $y$ &  $z$ \\
      Scenario A &  $4$ &  $0$ &  $2$ \\ 
      Scenario B &  $4$ &  $6$ &  $0$ \\ 
      Scenario C &  $0$ &  $0$ &  $5$ \\
    \end{tabular}
\]
It is now readily verified that, under the minimax regret criterion, if the decision~$z$ is not available then the decision~$y$ is preferred to the decision~$x$, while if the decision~$x$ is not available then the decision~$z$ is preferred to the decision~$y$, and, finally, if the decision~$y$ is not available then the decision~$x$ is preferred to the decision~$z$.
\end{example}

\section{Reduced scenarios for decisions in $\mathbb{R}^{n}$}

In the case where the decision set $D \subseteq \mathbb{R}^{n}$ is convex, we can establish that a reduced set of scenarios will determine the minimax regret policy (or indeed the minimax cost policy). For $K\subseteq S$, we write $x^{\ast }(K)$ for the solution to the minimax problem for the set of scenarios $K$, and cost or regret functions $f_{i}(x)$, thus 
\[
x^{\ast }(K)=\arg \min_{x}\max_{i\in K}f_{i}(x). 
\]
For the next result we will assume that the functions $f_{i}$ are quasiconvex, so that the level sets $A_{i}(z)=\{x \in D : f_{i}(x)\leq z\}$ 
are convex for each $z \in \mathbb{R}$.\bigskip

\begin{lemma}
\label{Lem:reduced scenarios A} Suppose that $D$ is a convex set in $\mathbb{R}^{n}$; for each $K\subseteq S$, the solution $x^{\ast }(K)$ to the minimax problem is uniquely defined; and for each $i\in S$, $f_{i}$ is quasiconvex. Then there is a set $K\subseteq S$, with $\left\vert K\right\vert \leq n+1$ and $x^{\ast }(K)=x^{\ast }(S)$.
\end{lemma}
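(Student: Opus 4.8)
The plan is to exploit the minimax structure together with convexity via a Helly-type argument. First I would characterise the optimal value $v^* = \max_{i \in S} f_i(x^*(S))$ and let $T \subseteq S$ be the set of \emph{active} scenarios, i.e. those $i$ with $f_i(x^*(S)) = v^*$. For any scenario $i \notin T$ we have $f_i(x^*(S)) < v^*$, so $x^*(S)$ lies in the interior of the level set $A_i(v^*)$ relative to the directions that matter; the binding behaviour of the minimax problem at $x^*(S)$ is governed entirely by $T$. Indeed, I would first argue that $x^*(T) = x^*(S)$: the point $x^*(S)$ is feasible for the $T$-problem with value $v^*$, and any point achieving value $< v^*$ on all of $T$ could, by a small convex combination with $x^*(S)$ (using quasiconvexity to keep the inactive $f_i$ below $v^*$ by continuity), be perturbed to beat $x^*(S)$ on all of $S$ — contradicting optimality and the assumed uniqueness. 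So the problem reduces to showing that the active set $T$ itself can be replaced by a subset of size at most $n+1$.

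The core step is then a Helly/Carathéodory argument on the active set. The key geometric fact is that $x^*(S) = x^*(T)$ is the unique minimiser of $\max_{i\in T} f_i$, which means there is no direction $d$ at $x^*(S)$ that strictly decreases \emph{every} active $f_i$ simultaneously — otherwise moving slightly in direction $d$ would reduce the max over $T$ (using quasiconvexity: along a segment into the sublevel set, $f_i$ does not exceed its endpoint values, combined with continuity to get strict decrease). Equivalently, writing $D_i = \{\,d : f_i(x^*(S) + \epsilon d) < v^* \text{ for small } \epsilon > 0\,\}$ for the ``improving cone'' of scenario $i$ at $x^*(S)$, these are convex sets (by quasiconvexity of $f_i$ and convexity of $D$) whose intersection over $i \in T$ is empty. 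By Helly's theorem in $\mathbb{R}^n$, there is a subcollection $K \subseteq T$ with $|K| \le n+1$ such that $\bigcap_{i \in K} D_i = \emptyset$. I would then show this $K$ works: since no common improving direction exists for $K$, the point $x^*(S)$ is a local — hence, by quasiconvexity, global — minimiser of $\max_{i \in K} f_i$, and since $K \subseteq S$ we have $\max_{i\in K} f_i(x^*(S)) \le \max_{i \in S} f_i(x^*(S)) = v^*$ with equality forced because each $i \in K$ is active. Uniqueness of $x^*(K)$ (an assumption of the lemma) then gives $x^*(K) = x^*(S)$, as required.

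The main obstacle I anticipate is making the "improving cone" argument fully rigorous under mere quasiconvexity (rather than convexity or differentiability): I need the improving cones $D_i$ to be genuinely convex and to correctly capture "no common descent direction $\iff$ $x^*(S)$ minimises the max", and I need to handle the relative interior / boundary-of-$D$ subtleties so that a direction $d$ with $x^*(S) + \epsilon d \in D$ is available. One clean way around the differentiability issue is to work directly with the sublevel sets: define $B_i = \{x \in D : f_i(x) < v^*\}$ (convex, possibly empty), note $x^*(S) \notin B_i$ for $i \in T$ but $x^*(S)$ is "surrounded" by the $B_i$ in the sense that $\bigcap_{i\in T} \overline{B_i}$ (or the closed sublevel sets $A_i(v^*)$) meet only at points where the max is still $v^*$; then apply Helly to the closed convex sets $A_i(v^*)$, $i \in T$, whose intersection is exactly $\{x : \max_{i\in T} f_i(x) \le v^*\} = \{x^*(S)\}$ by uniqueness — obtaining $K$ with $|K| \le n+1$ and $\bigcap_{i\in K} A_i(v^*) = \{x^*(S)\}$, which pins down $x^*(K)$. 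I would double-check which of these two routes ("improving directions" vs. "intersection of sublevel sets is a single point") gives the cleanest treatment of boundary cases, but in either case Helly's theorem is the engine and $n+1$ is exactly its bound.
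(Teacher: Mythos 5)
Your instinct that a Helly-type argument on the convex sets attached to the \emph{active} scenarios is the right engine is sound, and your reduction to the active set $T$ is fine, but both of your concrete routes have a gap at the decisive step. In the ``improving cone'' route, the implication ``no common strictly improving direction for $K$ at $x^*(S)$ $\Rightarrow$ $x^*(S)$ minimises $\max_{i\in K}f_i$'' fails under mere quasiconvexity: if $y$ satisfies $f_i(y)<v^*$ for all $i\in K$, quasiconvexity only gives $f_i\le v^*$ (not $<v^*$) along the segment from $x^*(S)$ to $y$ --- the function may sit at level $v^*$ on an initial portion of the segment and drop only later --- so the direction $y-x^*(S)$ need not belong to any $D_i$, and emptiness of $\bigcap_{i\in K}D_i$ does not rule out such a $y$. (Relatedly, ``local, hence by quasiconvexity global, minimiser'' is false: quasiconvex functions have non-global local minima on flat pieces, e.g.\ $\min(|x|,1)$.) In the sublevel-set route, Helly's theorem is a statement about \emph{nonemptiness} of intersections; it does not say that if $\bigcap_{i\in T}A_i(v^*)$ is a singleton then some $n+1$ of the $A_i(v^*)$ already intersect in exactly that singleton, and that claim is false in general: the four half-planes $\{x_1\le 0\},\{x_1\ge 0\},\{x_2\le 0\},\{x_2\ge 0\}$ in $\mathbb{R}^2$ meet exactly at the origin, yet no three of them do.

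The repair is small and stays within your framework: apply Helly to the \emph{strict} sublevel sets $B_i=\{x\in D:\ f_i(x)<v^*\}$ of the active scenarios $i\in T$, which are convex by quasiconvexity. Their intersection over $T$ is empty: if $y$ lay in all of them, then for small $\lambda>0$ the point $(1-\lambda)x^*(S)+\lambda y$ would have all active $f_i\le v^*$ (quasiconvexity) and all inactive $f_i<v^*$ (the same continuity you already invoke, and which the paper's proof also uses implicitly), giving a second optimiser of the full problem and contradicting the assumed uniqueness of $x^*(S)$. Helly (for a finite family of convex sets, closedness not needed) then yields $K\subseteq T$ with $|K|\le n+1$ and $\bigcap_{i\in K}B_i=\emptyset$, i.e.\ $\max_{i\in K}f_i(x)\ge v^*$ for every $x\in D$; since $\max_{i\in K}f_i(x^*(S))=v^*$, the point $x^*(S)$ attains the minimum of the $K$-problem, and uniqueness of $x^*(K)$ gives $x^*(K)=x^*(S)$. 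This corrected argument is genuinely different from the paper's proof, which instead separates each active level set $A_{i_u}(R^*)$ from rays emanating from $x^*(S)$ by hyperplanes and applies Carath\'eodory's theorem to the resulting normals; the strict-sublevel-set/Helly version is arguably shorter and avoids the separation construction, so it is worth writing out carefully in this form.
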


\begin{proof} {Proof.}  Suppose that $R^{\ast }=\min_{x \in D}\max_{i\in S}f_{i}(x)$. Let $x_{0}=x^{\ast}(S)$ be the point achieving this minimum and set $H=\{i\in S:f_{i}(x_{0})=R^{\ast }\}$. From the definition and uniqueness of $x_{0}$ and by quasiconvexity, the level sets $A_{i}(R^{\ast })$, $i\in H$, are convex and $\bigcap_{i\in H}A_{i}(R^{\ast })=\{x_{0}\}$. Hence, for every choice of (direction)~$u$, there is some $i_{u}\in H$ such that $x_{0}+\delta u\notin A_{i_{u}}(R^{\ast })$ for all $\delta >0$, and further there is some hyperplane through~$x_{0}$ which separates $A_{i_{u}}(R^{\ast})$ and the set of points of the form $x_{0}+\delta u$ for $\delta >0$. This hyperplane may be defined by its normal~$g_{u}$, say. Then $g_{u}^{\top}u\geq 0$ and $g_{u}^{\top }(x-x_{0})<0$ for every $x\in A_{i_{u}}(R^{\ast}) $. Write $G=\{g_{u}:u\in \mathbb{R}^{n}\}$ and observe that $g_{u}^{\top}u\geq 0$ implies $0$ is in the convex hull of $G$. By Caratheodory's theorem we can choose a set of at most $n+1$ such hyperplanes $g_{u_{1}},\dots ,g_{u_{k}}$, $k\leq n+1$, with $0$ in the convex hull of $\{g_{u_{1}},\dots ,g_{u_{k}}\}$. This implies that, for any choice of direction~$u$, we have $g_{u_{j}}^{\top}u\geq 0$ for at least one $j=1,\dots k$. Let $K\subseteq H$ given by $K=\{i_{u_{1}},\dots , i_{u_{k}}\}$ be the corresponding set of associated
scenarios. Suppose that $x^{\ast }(K)\neq x_{0}$ and thus 
\[
\max_{i\in K}f_{i}(x^{\ast }(K))<R^{\ast }. 
\]
Since $f_{i}(x_{0})=R^{\ast }$ for all $i\in K$ we have $x^{\ast }(K)$ is in the interior of the level set $A_{i}(R^{\ast })$ for all $i\in K$ and thus $g_{u_{j}}^{\top }(x^{\ast }(K)-x_{0})<0$ for all $j=1,\dots k$ which is a contradiction, thus establishing $x^{\ast }(K)=x_{0}$ as required.
\halmos \end{proof}

\bigskip

We can extend this result to the robust optimisation problem  RO$(\mathbb{P}_A )$ in the important case that $\mathbb{P}_A$ is determined by a matrix $A$ which is block diagonal. Thus the scenario set $S$ may be partitioned into $m$ disjoint components $S=S_1 \cup S_{2}\cup \ldots \cup S_m$, and the matrix $A$ is such that each constraint (corresponding to some row of $A$) involves only scenarios within a single component of $S$.

We show below a version of Lemma \ref{Lem:reduced scenarios A} that establishes that the robust optimisation solution is determined by a set of scenarios in at most $n+1$ of the components $S_{k}$.

For $K\subset S$, we write $x_{A}^{\ast }(K)$ for the solution to the problem RO$(\mathbb{P}_A )$ restricted to the set of scenarios $K$, and cost or regret functions $f_{i}(x)$, thus 
\[
x_{A}^{\ast }(K)=\arg \min_{x \in D}\max_{(p_{i}:i\in S)\in \mathbb{P}_{A}}\sum_{i\in K}p_{i}f_{i}(x).
\]

\begin{lemma}
\label{Lem:reduced scenarios B}
Suppose that $D$ is a convex set in $\mathbb{R}^{n}$; $f_{i}$ is strictly convex for each $i\in S$; and $A$ is block diagonal inducing a split of $S$ into $m$ disconnected components $S_{1},S_{2},\ldots,S_{m}$. Then there is a set $L\subseteq \{1,2,\ldots, m\}$ with at most $n+1$ elements and $x_{A}^{\ast }(\cup _{j\in L}S_{j})=x_{A}^{\ast }(S)$.
\end{lemma}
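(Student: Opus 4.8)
The plan is to eliminate the probabilities by showing that, because $A$ is block diagonal with homogeneous (zero right-hand side) rows, the robust objective collapses to an ordinary minimax over the $m$ components, and then to apply Lemma~\ref{Lem:reduced scenarios A} to these $m$ aggregated ``scenarios''. For each component $j$ let $\mathbb{P}_j$ denote the set of probability vectors $q$ on $S_j$ that satisfy the rows of $A$ indexed by $S_j$, and (deleting any component with $\mathbb{P}_j=\emptyset$, which can receive no probability under any $P\in\mathbb{P}_A$ and hence may be removed from $S$ without affecting either side of the claimed identity) define
\[
h_j(x)=\max_{q\in\mathbb{P}_j}\sum_{i\in S_j}q_if_i(x),\qquad x\in D.
\]
Given $p\in\mathbb{P}_A$, write $\pi_j=\sum_{i\in S_j}p_i$; then for every $j$ with $\pi_j>0$ the vector $p^{(j)}/\pi_j$ lies in $\mathbb{P}_j$ (here the homogeneity of the rows of $A$ is essential, since it is what makes the rescaled vector feasible), so
\[
\sum_{i\in S}p_if_i(x)=\sum_{j:\pi_j>0}\pi_j\sum_{i\in S_j}\frac{p_i}{\pi_j}f_i(x)\le\sum_{j=1}^m\pi_jh_j(x)\le\max_{j=1,\dots,m}h_j(x),
\]
and this bound is attained by putting all the probability mass on a maximizing component. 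Hence $\max_{P\in\mathbb{P}_A}\mathbb{E}_P(f(x))=\max_{j=1,\dots,m}h_j(x)$. The identical argument, applied to the restriction of $\mathbb{P}_A$ to a union $\bigcup_{j\in L}S_j$ of whole components --- which, by block diagonality, is exactly the set of probability vectors on that union whose component-$j$ blocks lie in $\mathbb{P}_j$ for $j\in L$ --- shows that $\max_P\sum_{i\in\bigcup_{j\in L}S_j}p_if_i(x)=\max_{j\in L}h_j(x)$, so that $x_A^\ast(\bigcup_{j\in L}S_j)=\arg\min_{x\in D}\max_{j\in L}h_j(x)$ for every $L\subseteq\{1,\dots,m\}$.

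Next I would verify that the family $\{h_j:j=1,\dots,m\}$ satisfies the hypotheses of Lemma~\ref{Lem:reduced scenarios A} with $m$ in place of $|S|$. For each fixed $q\in\mathbb{P}_j$ the function $\sum_{i\in S_j}q_if_i$ is a nonnegative combination, of total weight one, of the strictly convex functions $f_i$, hence strictly convex; and a pointwise maximum of strictly convex functions is strictly convex. Therefore each $h_j$ is strictly convex, in particular quasiconvex, and for every $L$ the function $\max_{j\in L}h_j$ is strictly convex on the convex set $D$, so its minimizer over $D$ --- and hence $x_A^\ast(\bigcup_{j\in L}S_j)$ --- is unique whenever it exists, exactly as required in Lemma~\ref{Lem:reduced scenarios A}.

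Applying Lemma~\ref{Lem:reduced scenarios A} to $h_1,\dots,h_m$ on $D\subseteq\mathbb{R}^n$ then produces a set $L\subseteq\{1,\dots,m\}$ with $|L|\le n+1$ and $\arg\min_{x\in D}\max_{j\in L}h_j(x)=\arg\min_{x\in D}\max_{j=1,\dots,m}h_j(x)$. By the reduction of the first paragraph the left-hand side is $x_A^\ast(\bigcup_{j\in L}S_j)$ and the right-hand side is $x_A^\ast(S)$, which gives the result.

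The main obstacle is the first step. One has to be careful that ``restricting $\mathbb{P}_A$ to a union of components'' really does decompose into independent per-component constraints --- this uses the block-diagonal structure in an essential way --- and, more importantly, that the contribution of component $j$ to the robust objective is exactly $\pi_jh_j(x)$, which is linear in the mass $\pi_j$; this linearity, which is precisely what forces the inner maximum to be a plain maximum $\max_jh_j(x)$ (attained at a vertex of the simplex in the $\pi$-variables), depends on the rows of $A$ having zero right-hand side. Once the robust problem has been rewritten as $\min_{x\in D}\max_jh_j(x)$ with each $h_j$ strictly convex, the remainder is a direct invocation of Lemma~\ref{Lem:reduced scenarios A}.
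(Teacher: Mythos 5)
Your proposal is correct and follows essentially the same route as the paper: you aggregate each block into the component-wise worst-case value $h_j$ (the paper's $g_j$), show via the homogeneity and block structure of $A$ that the inner maximization collapses to $\max_j h_j(x)$ (the paper does this through an auxiliary LP in the block masses $v_j$, which is the same argument), establish strict convexity of each $h_j$ exactly as in the paper, and then invoke Lemma~\ref{Lem:reduced scenarios A}. The only differences are presentational, e.g.\ your explicit handling of empty blocks and of the restriction to $\bigcup_{j\in L}S_j$, which the paper leaves implicit.
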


\begin{proof}{Proof.}
Suppose that $R^{\ast }$ is the value of the problem RO$(\mathbb{P}_A )$, so 
\[
R^{\ast }=\min_{x\in D}\max_{(p_{i}:i\in S)\in \mathbb{P}_{A}}\sum_{i\in S}p_{i}f_{i}(x). 
\]
Let $x_{0}=x^{\ast }(S)$ be the point achieving this minimum. Thus $R^{\ast}$ is the optimal value for the inner maximization LP($x_{0}$), which can be written as the following linear program 
\[
\begin{tabular}{lll}
LP($x$): & maximize & $\sum_{i\in S}p_{i}f_{i}(x)$ \\ 
& subject to & $Ap\leq 0,$ \\ 
&  & $\sum_{i\in S}p_{i}=1$, $p_{i}\geq 0.$
\end{tabular}
\]
We break this up into the individual components. Let 
\[
\begin{tabular}{lll}
LP$_{j}$($x$): & maximize & $\sum_{i\in S_{j}}p_{i}f_{i}(x)$ \\ 
& subject to & $A_{_{j}}p\leq 0,$ \\ 
&  & $\sum_{i\in S_{j}}p_{i}=1$, $p_{i}\geq 0,$
\end{tabular}
\]
where $A_{j}$ is the appropriate component of $A$ involved in component $S_{j}$. We write $g_{j}(x)$ for the optimal value of LP$_{j}$($x$). So if $p_{i}^{(j)}(x)$ is an optimal solution to LP$_{j}$($x$) then 
\[
g_{j}(x)=\sum_{i\in S_{j}}p_{i}^{(j)}(x)f_{i}(x). 
\]
We note that $g_{j}$ is strictly convex, since if we take $x_{C}=\lambda x_{A}+(1-\lambda )x_{B}$ for $\lambda \in (0,1)$ then
\begin{eqnarray*}
g_{j}(x_{C}) &=&\sum_{i\in S_{j}}p_{i}^{(j)}(x_{C})f_{i}(x_{C}) \\
&<&\lambda \sum_{i\in S_{j}}p_{i}^{(j)}(x_{C})f_{i}(x_{A})+(1-\lambda )\sum_{i\in S_{j}}p_{i}^{(j)}(x_{C})f_{i}(x_{B}) \\
&\leq &\lambda g_{j}(x_{A})+(1-\lambda )g_{j}(x_{B})
\end{eqnarray*}
where the final inequality arises because $p_{i}^{(j)}(x_{C})$ are feasible for LP$_{j}$($x_{A}$) and LP$_{j}$($x_{B}$).

Then consider the following problem with decision variables $v_{j}$, $j=1,2,...,m$:
\[
\begin{tabular}{lll}
LP1($x$): & maximize & $\sum_{j=1}^{m}v_{j}g_{j}(x)$ \\ 
& subject to & $\sum_{j=1}^{m}v_{j}=1$ \\ 
&  & $v_{j}\geq 0$, $j=1,2,...,m.$
\end{tabular}
\]

From a feasible solution to LP1($x$) we can recover a feasible solution to LP($x$) by taking $p_{i}=v_{j}p_{i}^{(j)}$ for $i\in S_{j}$. This is feasible since the probability ordering $Ap\leq 0$ is retained and $\sum p_{i}=\sum_{j=1}^{m}v_{j}\left( \sum_{i\in S_{j}}p_{i}^{(j)}\right) =1$. Thus LP1($x$) is equivalent to LP($x$).

Now observe that LP1($x$) has a solution which is simply $\max \{g_{j}(x):j=1,2,...,m\}$ and, since strict convexity implies a unique solution to the minimax problem, we have converted the problem into one for which Lemma \ref{Lem:reduced scenarios A} applies (with $g_{j}(x)$ being quasiconvex, since it is convex). Thus there is a subset $L\subset \{1,2,...m\}$ with at most $n+1$ elements and 
\[
R^{\ast }=\min_{x}\max_{j\in L}g_{j}(x). 
\]
Since $g_{j}(x)=\sum_{i\in S_{j}}p_{i}^{(j)}(x)f_{i}(x)$ where $p_{i}^{(j)}(x)$ is an optimal solution to LP$_{j}$($x$), this establishes the desired result.
\halmos \end{proof}

\bigskip
In the case that there are a limited set of core scenarios each associated with a set of extreme scenarios, this result implies that only $n+1$ of the core scenarios will be used in determining the minimax decision. But the restrictions on probabilities within each component of $S$ will imply a relatively large number of individual scenarios contributing to the final result. We believe this can represent an attractive compromise in practice.

\section{Deciding amongst a set of projects}

An important area of application occurs when there are multiple projects that are available and the decision maker has to determine which projects to proceed with. We suppose that there is a set of projects $\cal{T}$ of size $n$. Thus we can write the decision space as $D=\{0,1\}^{n}$. In scenario $i\in S$ the selection $T\subset \cal{T}$ gives a cost to the decision maker of $C_{i}(T)$. Thus the regret function for a selection $T_{0}$ is $R_{i}(T_{0})=C_{i}(T_{0})-\inf_{T\subset \cal{T}}C_{i}(T)$ and the least worst regret choice is $\min_{T \subset \cal{T}}\max_{i\in S}R_{i}(T)$

The minimax decision rule can be used to determine which of a number of alternative projects should be carried out. For consistency we will consider this within a framework of costs, rather than considering net present values which might be more natural in this context.

An example of the use of minimax regret as a basis for project decisions of this sort occurs with the UK National Grid process for Network Options Assessment \citep{NOA2020}. Each year a process is carried out to determine which network reinforcement projects should be started. A minimax regret (or LWR) analysis is carried out as part of this decision process. 

A simple case of this sort occurs with \emph{independent additive costs}. Under this assumption the cost implications of different projects are additive where the individual project costs in scenario $i$ are $c_{i}(1),c_{i}(2),...,c_{i}(n)$. These are the extra costs associated with each project in scenario $i$ (and are negative if that project is profitable in scenario $i$). We also have a total cost in scenario $i$ of $W_{i}$ if none of the projects are selected. Hence 
\[
C_{i}(T)=\sum_{k\in T}c_{i}(k)+W_{i}. 
\]
Thus each scenario has a different additive cost function over $D$. Note that the values $W_{i}$ have no effect on the regret functions $R_{i}$, but will have an impact on a minimax cost rule. 

We note that the property of there being a reduced set of $n+1$ scenarios, that holds when $D$ is convex, does not apply in this discrete setting, even with independent additive costs. In the appendix we demonstrate this with an example where there are three projects, but 5 scenarios are all involved in the determination of the optimal project choice.

As the next two examples show, independent additive costs still allow poor behaviour by the minimax regret rule. 

\begin{example}
\label{Ex1}Suppose there are just two scenarios $A$ and $B$ and we consider whether or not to go ahead with two projects $X$ and $Y$. The costs involved under different scenarios, if the project $X$ goes ahead or not are given in the following table: 
\[
\begin{tabular}{ccc}
& Do not proceed with $X$ & Proceed with $X$ \\ 
Scenario $A$ & $\ 0$ & \ $3$ \\ 
Scenario $B$ & \ $4$ & \ $0$
\end{tabular}
\]
The numbers related to project $Y$ are the same. Costs are additive, so we have the following costs for the 4 options. 
\[
\begin{tabular}{ccccc}
& $\varnothing $ & $\{X\}$ & $\{Y\}$ & $\{X,Y\}$ \\ 
Scenario $A$ & $\ 0$ & $\ 3$ & $\ 3$ & $\ 6$ \\ 
Scenario $B$ & $8$ & $\ 4$ & $\ 4$ & $\ 0$
\end{tabular}
\]
The first column gives the values for $W_{A}$ and $W_{B}$. Then the minimax cost policy selects one of the projects $X$ or $Y$ for completion, but does not do both. This equally applies to minimax regret since regret and costs are the same for this example.

Hence we prefer both $X$ and $Y$ to doing nothing, yet we do not wish to do both projects. Costs are independent and additive but when the two projects are considered independently of each other, the decisions obtained are different from those which would be obtained when they are considered jointly.
\end{example}
\bigskip

In this case with $D=\{0,1\}^{n}$ the IIA property states that changes in the costs or availability of a particular selection cannot alter the preference between two other selections. For example a restriction that two projects interfere with each other and cannot both be carried out together will not effect the optimal solution unless the optimal solution has both these projects selected. However, there is a weaker \emph{project-based} IIA property that is also of interest. This can be defined as follows:  if a decision rule chooses from a set of projects $A$ a subset $S_{L}^{\ast }(A)$, the project-based IIA property is that $n\notin S_{L}^{\ast}(\{1,2,...n\})$ implies that $S_{L}^{\ast }(\{1,2,...n\})=S_{L}^{\ast }(\{1,2,...,n-1\})$. In other words leaving out of the consideration set a project which is not chosen makes no difference to the choice. The project-based IIA property follows from the usual IIA property simply by leaving out all the project selections involving the missing project one at a time. The project-based IIA  property will hold for minimax cost, but, as the next example demonstrates, it does not hold for minimax regret, even with independent and additive costs.

\begin{example}
\label{Ex2} There are 3 projects ($X,Y,Z$) and 3 scenarios. The cost values
are as follows
\[
\begin{tabular}{cccc}
& $X$ & $Y$ & $Z$ \\ 
Scenario A & $-1$ & $1$ & $0$ \\ 
Scenario B & $-1$ & $-2$ & $3$ \\ 
Scenario C & $1$ & $-2$ & $-2$
\end{tabular}
\]
In the case that $W_{A}=W_{B}=W_{C}=0$ we get cost values
\[
\begin{tabular}{ccccccccc}
& $\varnothing $ & $\{X\}$ & $\{Y\}$ & $\{Z\}$ & $\{X,Y\}$ & $\{X,Z\}$ & $%
\{Y,Z\}$ & $\{X,Y,Z\}$ \\ 
A & $0$ & $-1$ & $1$ & $0$ & $0$ & $-1$ & $1$ & $0$ \\ 
B & $0$ & $-1$ & $-2$ & $3$ & $-3$ & $2$ & $1$ & $0$ \\ 
C & $0$ & $1$ & $-2$ & $-2$ & $-1$ & $-1$ & $-4$ & $-3$
\end{tabular}
\]
\bigskip and regret values 
\[
\begin{tabular}{ccccccccc}
& $\varnothing $ & $\{X\}$ & $\{Y\}$ & $\{Z\}$ & $\{X,Y\}$ & $\{X,Z\}$ & $%
\{Y,Z\}$ & $\{X,Y,Z\}$ \\ 
A & $1$ & $0$ & $2$ & $1$ & $1$ & $0$ & $2$ & $1$ \\  
B & $3$ & $2$ & $1$ & $6$ & $0$ & $5$ & $4$ & $3$ \\ 
C & $4$ & $5$ & $2$ & $2$ & $3$ & $3$ & $0$ & $1$
\end{tabular}
\]
Hence the minimax regret policy chooses just project $Y$ (with a worst regret of $2$).
 
Now suppose that project $Z$ is not available. Then we obtain the following regret values:
\[
\begin{tabular}{ccccc}
& $\varnothing $ & $\{X\}$ & $\{Y\}$ & $\{X,Y\}$ \\ 
A & $1$ & $0$ & $2$ & $1$ \\ 
B & $3$ & $2$ & $1$ & $0$ \\ 
C & $2$ & $3$ & $0$ & $1$
\end{tabular}
\]
The minimax regret policy chooses both projects $X$ and $Y$ (with worst regret of $1$). Thus the presence or absence of project $Z$ will determine whether project $X$ is selected, even though $Z$ is not itself selected.
\end{example}

We can be more explicit in the case of minimax mean regret, or minimax median regret. If we look at the set of cost values in Example \ref{Ex2} for a single scenario, then we can see that the costs for different subset choices are symmetric about a mid point given by $(1/2)\sum_{k}c_{i}(k)+W_{i}$. This follows because the cost for a subset $T$ and the cost for the complement of $T$ are symmetric around this half-way point. Thus the mean regret and median regret are the same and given by
\[
  \bar{R}_{i}(T)=\sum_{k\in T}c_{i}(k)-(1/2)\sum_{k}c_{i}(k).
\]
In a formulation where we write $u_k=1$ for $k \in T$ and $u_k=0$ otherwise, we get $\bar{R}_{i}(u)=\sum_{k}c_{i}(k)(u_k - 0.5)$.  However, this formulation still does not imply that the project-based IIA property holds for minimax mean regret. We can consider the following example:
\[
\begin{tabular}{cccc}
& $X$ & $Y$ & $Z$ \\ 
Scenario A & $-2$ & $3$ & $4$ \\ 
Scenario B & $1$ & $-3$ & $2$ \\ 
Scenario C & $3$ & $-2$ & $-1$
\end{tabular}
\]
We will not work through the details, but it is easy to check that the minimax mean regret policy chooses to do none of the projects giving a maximum mean regret of 0. However, if the project $Z$ is left out then the optimal choice is to do both $X$ and $Y$ giving a maximum mean regret of 0.5.

\section{Balancing capacity investment and risk}
In a planning context it is often the case that there is a set of investments that need to be determined and there is uncertainty with respect to the demand that will occur. Investment gives greater capacity to meet high demands on the system. In these circumstances, rather than an individual scenario corresponding to a single set of demands, it may be better to let each scenario represent a distribution of these demands, but with a specific mean. The costs associated with the risk of demand exceeding capacity are then related to the safety margin between the set of capacities built and the average demand levels. 

In this case each scenario $i$ is associated with a vector of (mean) demands $a^{(i)}$. The decision to build a vector of capacities $x$ then has investment cost $k^{\top} x$ for some vector $k$ of costs, together with the costs associated with the risk of demand exceeding capacity in scenario $i$, which are often simply a function $h(x-a^{(i))}$ of the difference between mean demand and capacity.

Here there is a structure on the scenarios arising from the position of the mean demand vectors $a^{(i)}$ in $\mathbb{R}^n$, which is not something we have seen before. Our next result shows that the extreme scenarios (for the convex hull of the vectors $a^{(i)}$, $i \in S$) are the ones that determine the minimax solution. 

\begin{lemma}
\label{lem:ir}
Suppose that the costs in each scenario $i\in S$ are given by 
\begin{equation}
    \label{eq:simple}
    f_{i}(x)=h(x-a^{(i)})+k ^{\top} x,
\end{equation}
for some quasiconvex function $h$.  Suppose further that the scenario $j\in S$ is such that $a^{(j)}$ is a convex combination of the $a^{(i)}$ corresponding to the remaining scenarios, so that, for some $S_j\subset S$ with $j\notin S_j$, we have $a^{(j)}=\sum_{i \in S_{j}}\lambda_i a^{(i)}$ where each $\lambda_i\ge 0$ and $\sum_{i \in S_{j}}\lambda_i =1$.   Then the scenario $j$ can be removed from~$S$ without any effect on the solution of either the minimax cost problem or the minimax regret problem. 
\end{lemma}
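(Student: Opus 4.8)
The plan is to prove the stronger pointwise statement that, for every $x\in D$, the maximum over $S$ equals the maximum over $S\setminus\{j\}$ — first for the cost functions $f_i$ and then for the associated regret functions — which immediately implies that the two minimax problems (cost or regret) are literally the same optimization and so have the same optimal decision.

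The observation used throughout is that, since $\sum_{i\in S_j}\lambda_i=1$, for every $x$ we have
\[
  x-a^{(j)}=\sum_{i\in S_j}\lambda_i\bigl(x-a^{(i)}\bigr),
\]
so $x-a^{(j)}$ is a convex combination of the points $x-a^{(i)}$, $i\in S_j$. For the minimax cost problem, quasiconvexity of $h$ then gives $h(x-a^{(j)})\le\max_{i\in S_j}h(x-a^{(i)})$; adding the term $k^\top x$, which is common to all scenarios, yields $f_j(x)\le\max_{i\in S_j}f_i(x)\le\max_{i\in S\setminus\{j\}}f_i(x)$ for every $x$. Hence $\max_{i\in S}f_i(x)=\max_{i\in S\setminus\{j\}}f_i(x)$ identically on $D$, and the minimax cost problem is unchanged by deleting scenario~$j$.

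For the minimax regret problem I would first show that the regret functions inherit exactly this structure. Writing $\phi(w)=h(w)+k^\top w$ one has $f_i(x)=\phi(x-a^{(i)})+k^\top a^{(i)}$, so $m_i:=\inf_{z}f_i(z)=\mu+k^\top a^{(i)}$ with $\mu:=\inf_w\phi(w)$, and therefore $R_i(x)=f_i(x)-m_i=\phi(x-a^{(i)})-\mu$. In particular $m_j=\mu+k^\top a^{(j)}=\sum_{i\in S_j}\lambda_i m_i$, so the $R_i$ are translates by the $a^{(i)}$ of the single function $\phi$, each shifted down by the common constant $\mu$ — the same picture as for the $f_i$. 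Applying the convex-combination identity once more gives $R_j(x)=\phi\bigl(\sum_{i\in S_j}\lambda_i(x-a^{(i)})\bigr)-\mu\le\max_{i\in S_j}\phi(x-a^{(i)})-\mu=\max_{i\in S_j}R_i(x)$ for every $x$, exactly as before, so the minimax regret problem is likewise unaffected.

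The step I expect to require care is the last inequality for regret: it uses quasiconvexity of $\phi=h(\cdot)+k^\top(\cdot)$, and quasiconvexity of $h$ alone does not pass to $\phi$, since adding a linear function can create an interior local maximum. The simplest way to make this watertight is to take $h$ convex (then $\phi$ is convex, hence quasiconvex, and every step is immediate); the cost part needs only quasiconvexity of $h$. I would also note that the identity $m_i=\mu+k^\top a^{(i)}$ presumes the relevant infima are attained translation-consistently, e.g.\ $D=\mathbb{R}^n$; for a general $D$ one must instead argue directly that $m_j$ dominates the corresponding combination of the $m_i$. If the hypotheses are to stay as written I would restrict the regret argument to the case where $h(\cdot)+k^\top(\cdot)$ is itself quasiconvex.
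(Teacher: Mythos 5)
Your proposal is correct and follows essentially the same route as the paper's proof: for costs, the identity $x-a^{(j)}=\sum_{i\in S_j}\lambda_i(x-a^{(i)})$ plus quasiconvexity of $h$ gives $f_j(x)\le\max_{i\in S_j}f_i(x)$ pointwise, and for regrets the paper likewise shows $R_i(x)=h(x-a^{(i)})+k^\top(x-a^{(i)})$ plus a common constant (your $\phi(x-a^{(i)})-\mu$) and then reapplies the cost argument with $k=0$. The caveats you flag are apt but apply equally to the paper's own proof, which silently ``redefines'' $h$ as $h(\cdot)+k^\top(\cdot)$ and treats it as still quasiconvex (automatic when $h$ is convex, as in the paper's application) and, in asserting that $x^*+a^{(i)}$ minimizes $f_i$, implicitly uses the same translation-invariance of the feasible set that your computation of $m_i=\mu+k^\top a^{(i)}$ requires.
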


\begin{proof}{Proof.}
We begin by showing that the regret functions are also of the form~\eqref{eq:simple}.  Let $x^*$  minimize  $h(x)+k ^{\top} x$. Then $x^*$ also minimizes $h(x)+k^{\top} (x+a^{(i)})$ and so $x^*+a^{(i)}$ minimizes $f_{i}(x)$. Thus the regret in each scenario $i$ is given by
\[
R_i(x)=h(x-a^{(i)})+k ^{\top} x -h(x^*)-k ^{\top} (x^*+a^{(i)})
\]
so that $R_i(x)=h(x-a^{(i)})+k ^{\top} (x -a^{(i)})$ plus a constant. Thus by redefining the function $h$ we obtain that $R_(x)$ is also of the form~\eqref{eq:simple} without any additional linear term (i.e.\ it is of the form $f_i(x)$ with $k=0$). 

Thus it is enough to show the result for the minimax cost problem. But now note that from $a^{(j)}=\sum_{i \in S_{j}}\lambda_i a^{(i)}$ for $j \notin S_{j} \subset S$, we may deduce
\begin{eqnarray*}
f_j(x) & = & h\biggl(\sum_{i \in S_{j}}\lambda_i (x - a^{(i)} )\biggr) + k ^{\top} x \\
            & \le &\max_{i \in S_{j}} h(x - a^{(i)}) + k^{\top} x \\
            & = & \max_{i \in S_{j}} f_i(x ) 
\end{eqnarray*}
where the inequality follows from the quasiconvexity of $h$.   Thus, removing scenario $j$ leaves the minimax problem unchanged as required.
\halmos
\end{proof}
\bigskip

Since, by Caratheodory's theorem, there is a set $K\subseteq S$ of size at most $n+1$ such that every $a^{(j)}$, $j\in S$, may be written as a convex combination of the $a^{(i)}$, $i\in K$, it follows that (analogously to Lemma~\ref{Lem:reduced scenarios A}), under the conditions of Lemma~\ref{lem:ir}, both the minimax cost and minimax regret decisions are determined solely by those scenarios in the set~$K$.

\subsection*{Example: electricity capacity procurement in Great Britain}
\label{sec:example}

Each year National Grid ESO (the GB electricity system operator) produces an Electricity Capacity Report (ECR), the purpose of which is to recommend a generation \emph{capacity-to-secure}, via a \textquotedblleft capacity auction\textquotedblright, in order to secure adequate GB electricity supplies in a given future winter. It is convenient for this example to take data from the 2015 ECR \citep{ECR2015} which was concerned with the provision of adequate electricity capacity for the winter 2019--20. The report considers a set~$S$ of 5 major, or core, scenarios for the above winter, together with a further 14 minor scenarios or \textquotedblleft sensitivities\textquotedblright. The core scenarios represent varying assumptions about the evolution of the electricity system and the evolution of demand, while the minor scenarios represent, for example, variations in the severity of the winter or in the level of renewable generation in that winter. The set~$D$ of possible decisions is essentially the set of capacities which might be recommended to be secured. For each scenario~$i$ in $S$ there is a cost function~$C_{i}$ on $D$ given by 
\begin{equation}
C_{i}(x)=\mathrm{VOLL}\times \mathrm{EEU}_{i}(x)+\mathrm{CONE}\times x,
\label{eq:11}
\end{equation}
where $x$ is the possible generation capacity to be secured, in MW, and $\mathrm{EEU}_{i}(x)$ is the corresponding \emph{expected energy unserved}, in MWh, over the given winter; the constant~$\mathrm{VOLL}$\ is the \emph{value of lost load}---taken in the above report as $\pounds 17,000$ /MWh---while the constant~$\mathrm{CONE}$\ is the \emph{cost of new entry} (procurement cost per unit of new generation capacity for the winter under study)---taken in the above report as $\pounds 49,000$/MW. The functions $\mathrm{EEU}_{i}(x)$ are estimated from data and are all approximately of the form $\exp (-\lambda (x - a^{(i)}))$ representing the exponential decay of risk as a function of the level of generation secured. 

Figure~\ref{fig:ecr_figure_14} is essentially a reproduction of Figure~14 of the 2015 ECR, and plots the cost functions~\eqref{eq:11} for the subset~$S^{\prime }$ of~$S$ consisting of the five major scenarios (plotted as solid lines) and two of the minor scenarios (plotted as dashed lines) considered in this report. The latter two scenarios are the most extreme of the set~$S$ of all the 19 scenarios considered, in the sense that their cost functions~\eqref{eq:11} bound (pointwise) above and below the cost functions for all the remaining scenarios. (As with Figure~14 of the 2015 ECR, Figure~\ref{fig:ecr_figure_14} omits some scenarios in order to avoid unnecessary clutter.)

\begin{figure}[!ht]
  \centering
  \includegraphics[scale=0.7]{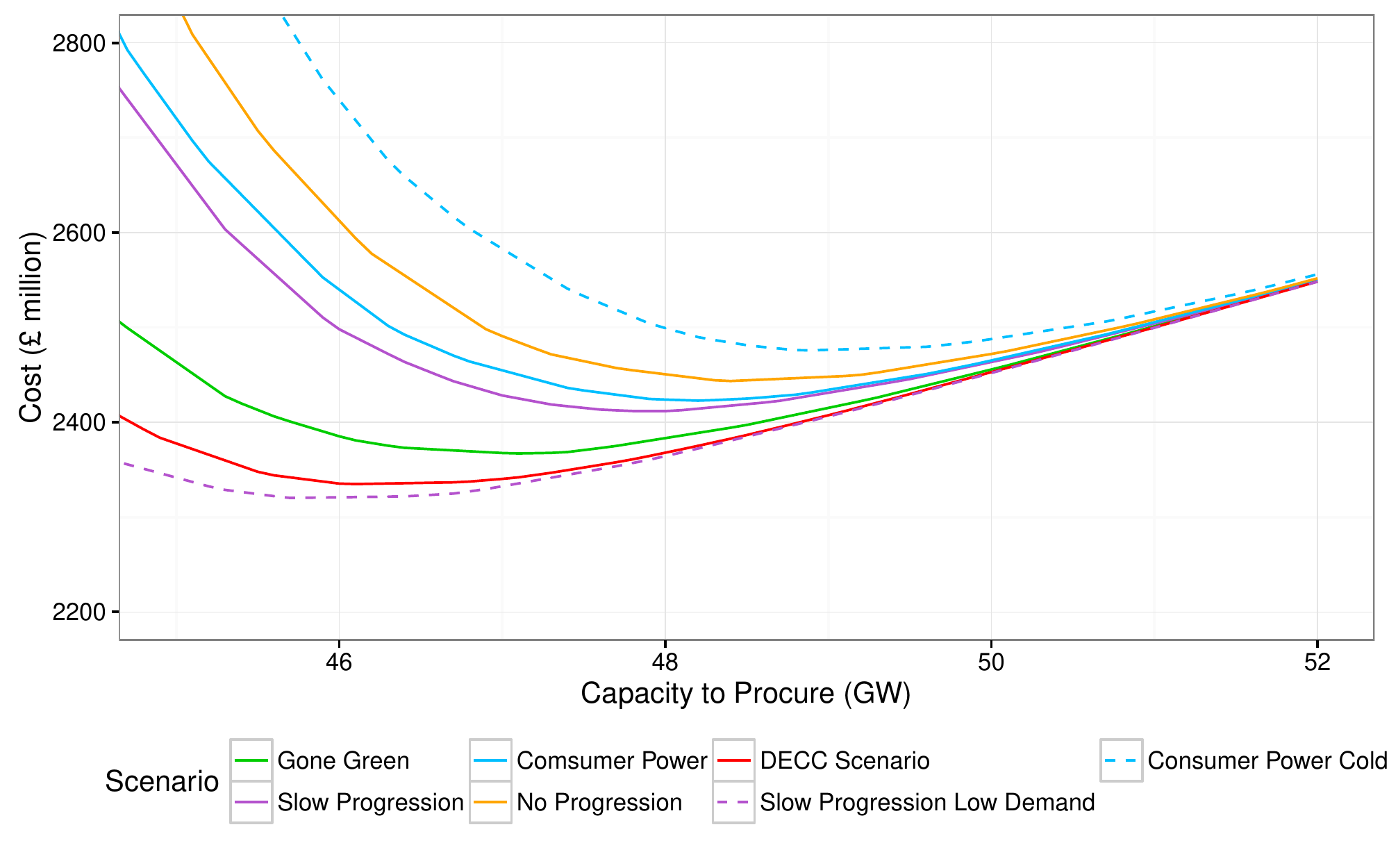}
  \caption{Cost functions $C_i(x)$ for each of the five major and two minor scenarios, as plotted in Figure 14 of the 2015 ECR.  The names attached to the scenarios are those used in the report.}
  \label{fig:ecr_figure_14}
\end{figure}

The 2015 ECR recommends a capacity-to-secure based on the use of minimax regret analysis. The regret functions $R_{i}(x)$ corresponding to the set~$S^{\prime }$ of the seven scenarios already considered above are plotted in Figure~\ref{fig:ECR_regret_functions}. It is easy to see that the set~$K$ consisting of the two minor scenarios plotted as dashed lines determine the result of the minimax regret analysis in the sense of Lemma~\ref{Lem:reduced scenarios A}, i.e. the minimax regret capacity-to-secure~$x^{\ast}(K)=47.8$ GW determined by just these two scenarios is the same as the minimax regret capacity-to-secure~$x^{\ast }(S^{\prime })$ determined by the 7 scenarios in the set~$S^{\prime }$. Indeed a minimax regret analysis of the entire set~$S$ of all 19 scenarios considered in the 2015 ECR shows that $x^{\ast }(K)=x^{\ast }(S)$, where $x^{\ast }(S)$ is the capacity-to-secure determined by the latter set---as would be evident if the regret functions corresponding to all 19 scenarios were plotted in Figure~\ref{fig:ECR_regret_functions}.

\begin{figure}[!ht]
  \centering
  \includegraphics[scale=0.7]{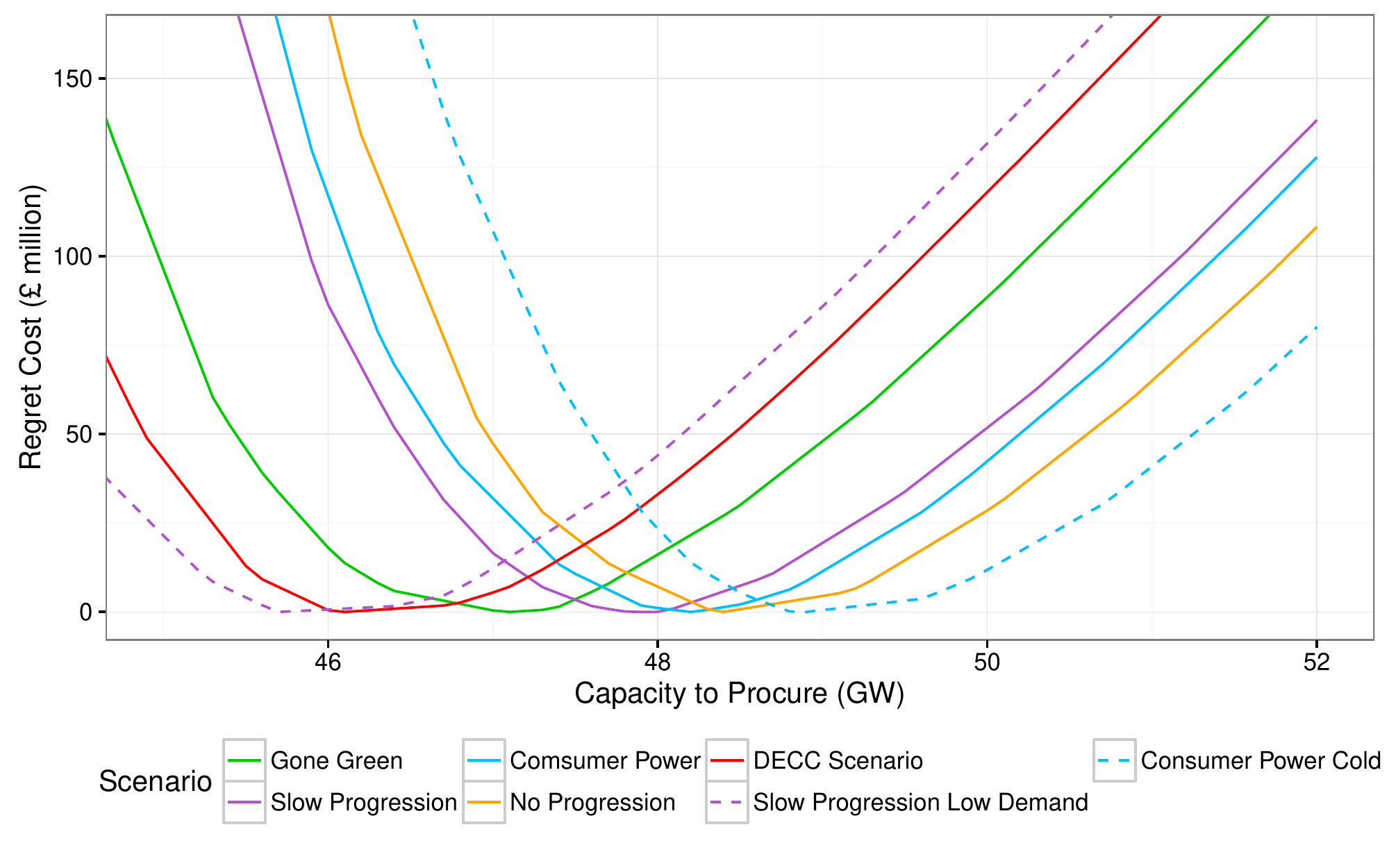}
  \caption{Regret functions~$R_i(x)$ for each of the five major and two minor scenarios illustrated in Figure 14 of the 2015 ECR.}
  \label{fig:ECR_regret_functions}
\end{figure}

It is thus the case that it is the two \textquotedblleft extreme\textquotedblright\ scenarios identified above which largely determine the result of the minimax regret analysis. The result of this analysis is unaffected by variation of the remaining scenarios so long as they do not become more extreme than either of these two scenarios. This is a consequence of Lemma~\ref{Lem:reduced scenarios A} which, as there is just a single decision variable, implies that the result is determined by just two scenarios. The analysis we give here (and presented to National Grid ESO) formed the basis for a thorough analysis of sensitivities in ECR 2017 \citep{ECR2017} (pages 87--97). The recommendation in that year proved robust to these further analyses, and a minimax regret analysis continues to be used up to the present. Each year the ECR is reviewed by a government appointed panel of technical experts and in their report for 2021 \citet{PTE-ECR2021} (page 27) concern is expressed over the extent to which the two scenarios determining the capacity requirement are extreme cases. One conclusion is that this methodology makes it important to take great care in the choice of scenario set. The problems that may occur in determining appropriate probabilities in a conventional analysis are to a great extent replaced by problems in determining the extreme scenarios that should be included.

\section{Conclusions}
\label{sec:conclusions}

An important property of the minimax decision-making criterion (whether applied to regret functions or to cost functions), is that the outcome of the analysis is typically determined by a small number of scenarios. This is an issue that needs to be considered by planners making use of these robust approaches, since sensitivity to the scenarios chosen reduces the apparent advantage (from a public planning perspective) of there being no requirement to assign probabilities to scenarios. We explore this issue and show how it can be helpful to view the minimax criterion as arising from a robust approach, with an uncertainty set which is some convex subset of the set of all probability measures. Then the use of partial ordering between the probabilities for ``core" and ``extreme" scenarios will widen the scenario set considered.

The paper has further focused on minimax regret analysis. It is well known that this fails to satisfy usually accepted conditions of economic rationality and in particular the IIA property. We give a variety of examples to show how widespread this behaviour is, both with continuous and discrete decision sets, whenever regret values are the basis for decisions. We show that with a continuous decision space the IIA property can only be retained when the regret values for different scenarios are combined through taking an expectation over a set of probabilities, which then reduces the problem  to one of minimizing expected costs.

In the case with a finite number of possible decisions we introduce a type of generalised minimax regret decision rule that defines regret in relation to some function of the set of outcomes associated with different decisions. We define the minimax median regret which may have some advantages in reducing the opportunities for gaming, and could be considered as an alternative decision rule for planners who are concerned about this possibility.  

One context in which minimax regret can be used as a planning tool occurs when projects are put forward for potential funding. The planner must then determined the selection of projects that will proceed. If the projects are proposed by a number of different firms, then the absence of the IIA property leaves minimax regret open to being gamed. Our analysis shows the importance of being careful in these circumstances.

\section*{Acknowledgements}

\label{sec:acknowledgements}

The authors would like to thank the Isaac Newton Institute for Mathematical Sciences for support during the Mathematics of Energy Systems programme 
\newline
(https://www.newton.ac.uk/event/mes/), when early work on this paper was undertaken.

\bigskip

\section*{Appendix: An example with $D=\{0,1\}^n$ and more than $n+1$ active scenarios}

We take $D=\{0,1\}^{n}$ and give an example where more than $n+1$ scenarios are active, in the sense that removing any one of them changes the minimax regret solution. We take 3 projects (X, Y and Z) and 5 scenarios (A, B, C, D and E) with the following set of costs
\[
\begin{tabular}{cccc}
costs & project X & project Y & project Z \\ 
scenario A & $6$ & $-2$ & $-4$ \\ 
scenario B & $2$ & $4$ & $4$ \\ 
scenario C & $4$ & $-8$ & $-1$ \\ 
scenario D & $-6$ & $6$ & $0$ \\ 
scenario E & $-2$ & $-7$ & $1$%
\end{tabular}
\]
The costs for different choices of sets of projects are:
\[
\begin{tabular}{ccccccccc}
& $\varnothing $ & \{X\} & \{Y\} & \{Z\} & \{X,Y\} & \{X,Z\} & \{Y,Z\} & \{X,Y,Z\} \\ 
scenario A & $0$ & $6$ & $-2$ & $-4$ & $4$ & $2$ & $-6$ & $0$ \\ 
scenario B & $0$ & $2$ & $4$ & $4$ & $6$ & $6$ & $8$ & $10$ \\ 
scenario C & $0$ & $4$ & $-8$ & $-1$ & $-4$ & $3$ & $-9$ & $-5$ \\ 
scenario D & $0$ & $-6$ & $6$ & $0$ & $0$ & $-6$ & $6$ & $0$ \\ 
scenario E & $0$ & $-2$ & $-7$ & $1$ & $-9$ & $-1$ & $-6$ & $-8$
\end{tabular}
\]
We can convert this into a set of regret figures:
\[
\begin{tabular}{ccccccccc}
& $\varnothing $ & \{X\} & \{Y\} & \{Z\} & \{X,Y\} & \{X,Z\} & \{Y,Z\} & \{X,Y,Z\} \\ 
scenario A & $6$ & $12$ & $4$ & $2$ & $10$ & $8$ & $0$ & $6$ \\ 
scenario B & $0$ & $2$ & $4$ & $4$ & $6$ & $6$ & $8$ & $10$ \\ 
scenario C & $9$ & $13$ & $1$ & $8$ & $5$ & $12$ & $0$ & $4$ \\ 
scenario D & $6$ & $0$ & $12$ & $6$ & $6$ & $0$ & $12$ & $6$ \\ 
scenario E & $9$ & $7$ & $2$ & $10$ & $0$ & $8$ & $3$ & $1$ \\ 
max regret & $9$ & $13$ & $12$ & $10$ & $10$ & $12$ & $12$ & $10$
\end{tabular}
\]
The minimax regret policy with all scenarios present is $\varnothing $, i.e. to accept none of the projects. But remove scenario A and we choose \{X,Y\}; remove scenario B and we choose \{X,Y,Z\}; remove scenario C and we choose \{X,Z\}; remove scenario D and we choose \{Y\}: and remove scenario E and we choose \{Z\}. Thus dropping any scenario will cause a change in the decision made, so that all 5 scenarios contribute to the decision.

\begin{thebibliography}{}

\bibitem[Anscombe and Aumann(1963)]{Anscombe63} Anscombe, F.J. and Aumann, R.J., 1963. A definition of subjective probability. \emph{Annals of Mathematical Statistics}, \textbf{34}(1), pp.199--205.

\bibitem[Bell(1982)]{Bell82} Bell, D.E., 1982. Regret in decision making under uncertainty. \emph{Operations Research}, texbf{30}(5), pp.961--981.

\bibitem[Bleichrodt and Wakker(2015)]{Bleichrodt2015} Bleichrodt, H. and Wakker, P.P., 2015. Regret theory: A bold alternative to the alternatives. \emph{The Economic Journal}, \textbf{125}(583), pp.493--532.

\bibitem[Chang and Davila(2017)]{Chang2007} Chang, N.B. and Davila, E., 2007. Minimax regret optimization analysis for a regional solid waste management system. \emph{Waste Management}, \textbf{27}(6), pp.820--832.

\bibitem[Daskin et al.(1997)]{Daskin97} Daskin, M.S., Hesse, S.M. and Revelle, C.S., 1997 $\alpha$-reliable $p$-minimax regret: A new model for strategic facility location modeling. \emph{Location Science}, textbf{5}(4), pp.227--246.

\bibitem[Dent et al.(2020)]{Dent2020} Dent, C., French, S. and Zachary, S., 2020, Decision making for future energy systems: Incorporating rapid change and future uncertainties, Report for OFGEM available at www.ofgem.gov.uk/publications-and-updates/decision-making-future-energy-systems

\bibitem[Diecidue and Somasundaram(2017)]{Diecidue2017} Diecidue, E. and Somasundaram, J., 2017. Regret theory: A new foundation. \emph{Journal of Economic Theory}, \textbf{172}(1), pp.88--119.

\bibitem[Gul(1992)]{Gul92} Gul, F., 1992. Savage's theorem with a finite number of states. \emph{Journal of Economic Theory}, \textbf{57}(1), pp.99--110.

\bibitem[Hayashi(2008)]{Hayashi2008} Hayashi, T., 2008. Regret aversion and opportunity dependence. \emph{Journal of Economic Theory}, \textbf{139}(1), pp.242--268.

\bibitem[Hughes and Strachan(2010)]{Hughes2010} Hughes, N. and Strachan, N., 2010. Methodological review of UK and international low carbon scenarios. \emph{Energy Policy}, \textbf{38}(10), pp.6056--6065.

\bibitem[Lempert et al.(2006)]{Lempert2006} Lempert, R.J., Groves, D.G., Popper, S.W. and Bankes, S.C., 2006. A general, analytic method for generating robust strategies and narrative scenarios. \emph{Management Science}, \textbf{52}(4), pp.514--528.

\bibitem[Loulou and Kanudia(1999)]{Loulou99} Loulou, R. and Kanudia, A., 1999. Minimax regret strategies for greenhouse gas abatement: methodology and application. \emph{Operations Research Letters}, \textbf{25}(5), pp.219--230.

\bibitem[Marchau et al.(2019)]{Marchau2019} Marchau, V.A., Walker, W.E., Bloemen, P.J. and Popper, S.W., 2019. \emph{Decision making under deep uncertainty: from theory to practice}. Springer Nature.

\bibitem[Milnor(1951)]{Milnor51} Milnor, J., 1951. Games against nature. RAND Project Air Force Report, Santa Monica CA.

\bibitem[NationalGridESO(2015)]{ECR2015} National Grid EMR\ Electricity Capacity Report, 2015, Report available at https://www.emrdeliverybody.com/CM/Capacity.aspx

\bibitem[NationalGridESO(2017)]{ECR2017} National Grid EMR\ Electricity Capacity Report, 2017, Report available at https://www.emrdeliverybody.com/CM/Capacity.aspx

\bibitem[NationalGridESO(2020a)]{NOA2020} National Grid ESO, Network Options Assessment Methodology, July 2020. Report available at https://www.nationalgrideso.com/document/174231/download.

\bibitem[NationalGridESO(2020b)]{NationalGridFES} National Grid, 2020. Future Energy Scenarios, https://www.nationalgrideso.com/future-energy/future-energy-scenarios

\bibitem[NationalGridESO(2021)]{PTE-ECR2021} Panel of Technical Experts: Report on the National Grid ESO Electricity Capacity Report 2021. Report available at https://www.gov.uk/government/publications/national-grid-eso-electricity-capacity-report-2021-findings-of-the-panel-of-technical-experts.

\bibitem[Sanders et al.(2016)]{Sanders2016} Sanders, D., Hart, A., Ravishankar, M., Brunert, J., Strbac, G., Aunedi, M. and Pudjianto, D., 2016. \emph{An analysis of electricity system flexibility for Great Britain}. Carbon Trust / Imperial College: London, UK.

\bibitem[Savage(1951)]{Savage51} Savage, L.J., 1951. The theory of statistical decision. \emph{Journal of the American Statistical Association}, \textbf{46}(253), pp.55--67.

\bibitem[Savage(1954)]{Savage54} Savage, L.J., 1954. \emph{The Foundations of Statistics}, John Wiley and Sons. Inc.: New York, NY, USA.

\bibitem[Shapiro and Kleywegt(2020)]{Shapiro2020} Shapiro, A. and Kleywegt, A., 2002. Minimax analysis of stochastic problems. \emph{Optimization Methods and Software}, \textbf{17}(3), pp.523--542.

\bibitem[Stoye(2011)]{Stoye2011} Stoye, J., 2011. Axioms for minimax regret choice correspondences. \emph{Journal of Economic Theory}, \textbf{146}(6), pp.2226--2251.

\bibitem[Van der Pol et al.(2017)]{VanderPol2017} Van der Pol, T.D., Gabbert, S., Weikard, H.P., van Ierland, E.C. and Hendrix, E.M.T., 2017. A minimax regret analysis of flood risk management strategies under climate change uncertainty and emerging information. \emph{Environmental and Resource Economics}, \textbf{68}(4), pp.1087--1109.

\end{thebibliography}
\end{document}